\newcommand\R{\mathbb R}
\newcommand\Z{\mathbb Z}
\newtheorem{theorem}{Theorem}[section]
\newtheorem{proposition}{Proposition}[section]
\newtheorem{lemma}{Lemma}[section]
\newtheorem{conj}{Conjecture}[section]
\newtheorem{definition}{Definition}[section]
\newtheorem{notation}{Notation}[section]
\newtheorem{Corollary}{Corollary}[section]
\begin{document}

\title{The exact bound for the reverse isodiametric problem in 3-space
}
\author{Arkadiy Aliev}
\date{}
\maketitle

    \begin{abstract}
        Let $K$ be a convex body in $\mathbb{R}^{3}$. We denote the volume of $K$ by $Vol(K)$ and the diameter of $K$ by $Diam(K).$  In this paper we prove that there exists a linear bijection $T:\mathbb{R}^{3}\to \mathbb{R}^{3}$ such that $Vol(TK)\geq \frac{\sqrt{2}}{12}Diam(TK)^3$ with equality if $K$ is a simplex, which was conjectured by Endre Makai Jr.  
        
        As a corollary, we prove that any non-separable lattice of translates in $\mathbb{R}^{3}$ has density of at least $\frac{1}{12}$, which is a dual analog of Minkowski's fundamental theorem.  Also we prove that  $Vol(K)\geq \frac{1}{12}\omega(K)^3$, where $K\subset \mathbb{R}^{3}$ is a convex body and  $\omega(K)$ is the lattice width of $K$. In addition, there exists a  three-dimensional simplex $\Delta\subset \mathbb{R}^3$ such that $Vol(\Delta) = \frac{1}{12}\omega(\Delta)^3.$ 
    \end{abstract}

\section*{Organisation of the paper}

        The paper is organised in the following way. In the first section, we give basic definitions, review  the literature and the best-known results on the reverse isodiametric problem and on the non-separable lattices. In the sections 2-7 we give the proof of the reverse isodiamteric problem in 3-space.   
    \tableofcontents

\begin{section}{Introduction}

    We call a convex compact set $K\subset\mathbb{R}^{n}$ with non-empty interior a convex body. Denote the volume of $K$ by $Vol(K)$ and the diameter of $K$ by $Diam(K).$ For any $\lambda \in \mathbb{R}$ we define $\lambda K := \{\lambda x \text{ }\vert \text{ } x\in K\}$. $K$ is centrally symmetric if $K = -K$. The difference body of $K$ is defined as $K-K:=\{x-y \text{ }\vert\text{ } x,y\in K\}$.
    
    The isodiamteric inequality is a classic result in convex geometry, which states  that the Euclidean ball in $\mathbb{R}^n$ has the maximum volume among all convex bodies of the same diameter, but it is impossible to give a non-trivial lower bound for the volume of a convex body in terms of its diameter. However, if one first minimizes the diameter using a volume-preserving linear mapping, then a non-trivial bound exists. 

    \begin{definition}
        Let $K \subset \mathbb{R}^{n}$ be a convex body. Its isodiametric quotient is defined as 
        $$
            idq(K) = \frac{Vol(K)}{Diam(K)^n}.
        $$
    \end{definition}    

    Endre Makai Jr.  posed the conjecture, which states that there exists a linear image of a convex body $K$ such that its isodiametric quotient is at least as large as  that of a regular simplex.
    
    \begin{conj}[Makai Jr. \cite{makai1978}]
        For every convex body $K\subset \mathbb{R}^n$ there exist a linear bijection $T$ such that 
        $$
            idq(TK)\geq \frac{\sqrt{n+1}}{n! 2^{\frac{n}{2}}}.
        $$
    \end{conj}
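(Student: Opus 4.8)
The plan is to exploit the full linear invariance of the problem to place $K$ in a normalized position, reduce the volume lower bound to a purely linear-algebraic statement about John's decomposition of the difference body, and then isolate a combinatorial selection of a good basis as the crux that must be handled in every dimension. First I would use that $idq$ is invariant under volume-preserving linear maps and scales homogeneously under general $T\in GL(n,\R)$, so we may apply any $T$ and renormalize. Choosing $T$ so that the minimal-volume ellipsoid of the difference body $TK-TK$ is the unit ball $B$, we get $TK-TK\subseteq B$, hence every chord satisfies $Diam(TK)=\max_{z\in TK-TK}|z|\le 1$; since the contact points have norm $1$ and lie in $TK-TK$, in fact $Diam(TK)=1$. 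It therefore suffices to prove $Vol(TK)\ge \sqrt{n+1}/(n!\,2^{n/2})$. By the Proposition of Ball quoted above (there for $n=3$, in general John's theorem for the symmetric body $TK-TK$), there are unit contact vectors $u_1,\dots,u_m\in (TK-TK)\cap\partial B$ and weights $\lambda_i>0$ with $\sum_i\lambda_i u_iu_i^T=Id_n$ and $\sum_i\lambda_i=n$.

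Second, I would convert the volume bound into a determinant bound valid in all dimensions. Each contact vector is a difference $u_i=x_i-y_i$ with $x_i,y_i\in TK$, so Proposition \ref{app} gives, for every $n$-subset $S=\{i_1,\dots,i_n\}$,
$$Vol(TK)\ge \frac{|\det(u_{i_1},\dots,u_{i_n})|}{n!}.$$
Thus the whole conjecture reduces to the following selection lemma: among the contact vectors of any decomposition of the identity by unit vectors one can choose $n$ of them with $|\det(u_{i_1},\dots,u_{i_n})|\ge \sqrt{n+1}/2^{n/2}$. The constant is exactly right in every dimension: for the regular simplex the contact directions are the normalized edge vectors $v_i-v_j$, and $n$ edges emanating from one vertex have Gram matrix with $1$ on the diagonal and $1/2$ off it, whose determinant is $(n+1)/2^n$. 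This both realizes equality in Proposition \ref{app} and shows the target constant cannot be improved.

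Third, for the selection lemma the first thing to try is Cauchy–Binet: writing $U$ for the matrix with columns $\sqrt{\lambda_i}\,u_i$, we have $UU^T=Id_n$, so
$$\sum_{|S|=n}\Big(\prod_{i\in S}\lambda_i\Big)\det(u_i:i\in S)^2=\det(UU^T)=1,$$
and since the weights $\prod_{i\in S}\lambda_i$ sum to the elementary symmetric polynomial $e_n(\lambda)$, some $S$ satisfies $\det^2\ge 1/e_n(\lambda)$. This averaging is tight for $n=2$ (the equilateral triangle, where $e_2(\lambda)=4/3=2^2/3$), but it is too weak for $n\ge 3$: under the sole constraint $\sum\lambda_i=n$ the quantity $e_n(\lambda)$ can be as large as $n^n/n!$, and already for the regular simplex in $\R^3$ (where $m=6$ and $\lambda_i=1/2$) one gets $e_3=\binom{6}{3}(1/2)^3=5/2>2=2^3/(n+1)$, because most $n$-subsets of edges are degenerate (coplanar triples) and drag the weighted average below the true maximum.

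The main obstacle is therefore exactly this selection step in general dimension: one must pick out the non-degenerate $n$-subset — morally a spanning star of edges — rather than average over all of them, and must rule out configurations in which many contact points cluster. This is where the structure of the problem, namely the central symmetry of the difference body and the extremality of its Löwner ellipsoid, has to be used, and it is what makes the full conjecture hard; the present paper carries out precisely this analysis for $n=3$, where $m\le 6$ contact vectors render the case distinction finite (Sections 3–8). For general $n$ I would attempt either a reweighting of the Cauchy–Binet identity that suppresses the degenerate subsets, or an induction on dimension by projecting along one contact direction in the spirit of the proof of Proposition \ref{app}, renormalizing the enclosing ellipsoid inside the resulting hyperplane and tracking how the factor $\sqrt{n+1}$ degrades under the projection. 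Controlling this degradation across dimensions while preserving the exact constant is the step I expect to be decisive, and the one that I anticipate will be hardest to make tight.
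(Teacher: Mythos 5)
Your reduction is correct as far as it goes, and it coincides with the paper's own scheme: putting the difference body in L\"owner position gives $Diam(TK)=1$ with unit contact vectors decomposing the identity, and Proposition~\ref{app} converts a large $n\times n$ minor of contact vectors into the volume bound; your Cauchy--Binet identity is a close cousin of Lemma~\ref{id-cor} (which the paper derives from Ivanov's frame identity applied to the cross products $[u_i;u_j]$), and your computation that plain averaging yields only $\det^2\geq 2/5<1/2$ already for the regular $3$-simplex is accurate. But the statement you were asked to prove is the conjecture for \emph{all} $n$, and your argument stops exactly at the step you yourself flag as decisive: the selection lemma asserting that some $n$-subset of contact vectors has $|\det|\geq \sqrt{n+1}/2^{n/2}$. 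Neither of your proposed repairs is carried out. The ``reweighting of Cauchy--Binet'' is not specified at all, and the projection-induction route faces a concrete obstruction: after projecting along one contact direction, the images of the remaining contact vectors are no longer unit vectors, are no longer contact points of the L\"owner ellipsoid of the projected difference body, and the decomposition-of-identity property is destroyed, so the inductive hypothesis cannot be invoked in the hyperplane; the Zhang-type chord inequality used in Proposition~\ref{app} also loses a factor of $d$ at each step, which is fatal if the constant is to remain exact.

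What the paper actually does for $n=3$ --- and what has no analogue in your outline --- is to turn the selection step into a finite constrained optimization: with $\lambda_6=\max_i\lambda_i$ and $a_{ij}=\det(u_i,u_j,u_6)$, the quadratic Pl\"ucker-type relations of Lemma~\ref{det-like} hold, and since these relations are homogeneous of the same degree on both sides, proving that $\max|a_{ij}|\leq 1$ forces $\sum\lambda_i\lambda_j a_{ij}^2\leq 2$ combines with the normalization $\sum\lambda_i\lambda_j a_{ij}^2=1$ of Lemma~\ref{id-cor} to give $\max|a_{ij}|\geq 1/\sqrt{2}$. That bound is established through the convexity/boundary analysis of maximal admissible sets (Proposition~\ref{max-form}), the lambda-product estimates (Lemmas~\ref{lemma-2}--\ref{lemma3.6}), and the case analyses of Propositions~\ref{lambda-zero-case}, \ref{bad-case} and \ref{zero-case} in Sections~\ref{s5}--\ref{s8}. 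This machinery is precisely the ``suppression of degenerate subsets'' you correctly identify as necessary; without it, or a genuinely new idea for general $n$, your proposal is a program rather than a proof --- indeed the conjecture remains open for $n\geq 4$, and even the $n=3$ instance of your selection lemma would require reproducing the paper's entire case analysis.
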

    
    The two dimensional case of the conjecture was found by Behrend in 1937 \cite{behrend}. Cases of dimension three and more still remain open and the best results in them were obtained by Merino B.G. and  Schymura M. in \cite{shimura}. 

    \begin{proposition}[Merino-Schymura, \cite{shimura}] For a convex body $K \subset \mathbb{R}^{n}$ there exists a linear bijection $T$ such that

    $$
        idq(TK)\geq \frac{\left(\frac{n+1}{2}\right)^{\frac{n}{2}}}{\binom{n(n+1)/2}{n}^{1/2}n!} \sim \sqrt{e}\left(\frac{2\pi}{n}\right)^{\frac{1}{4}}\frac{\sqrt{n+1}}{n!e^{\frac{n}{2}}}.
    $$
        
    \end{proposition}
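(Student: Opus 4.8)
The plan is to normalise $K$ by a linear map so that its diameter is at most $1$, and then to extract from the John-type decomposition of identity a single $n$-tuple of contact points whose determinant is large; Proposition \ref{app} then converts that determinant directly into a lower bound on the volume.

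First I would apply a linear bijection $T$ so that the minimal volume ellipsoid containing the centrally symmetric difference body $T(K-K)=TK-TK$ is the unit ball $B$; such a $T$ exists because every convex body can be brought into John position. Since $Diam(TK)=\max_{z\in TK-TK}|z|$ and $TK-TK\subseteq B$, this forces $Diam(TK)\le 1$, so it suffices to bound $Vol(TK)$ from below, as then $idq(TK)\ge Vol(TK)$. Next I would invoke the general-dimensional form of the decomposition of identity from \cite{ball} (the excerpt records the case $n=3$, with $m=6=\binom{4}{2}$ contact points): there are $m\le \binom{n+1}{2}$ points $u_1,\dots,u_m\in (TK-TK)\cap\partial B$ and weights $\lambda_i\ge 0$ with $\sum_i\lambda_i u_iu_i^{T}=Id_n$ and, taking traces and using $|u_i|=1$, with $\sum_i\lambda_i=n$. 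Forming the $n\times m$ matrix $U$ whose $i$-th column is $\sqrt{\lambda_i}\,u_i$, one has $UU^{T}=Id_n$, and applying the Cauchy--Binet formula to $\det(UU^{T})=1$ gives
$$\sum_{S}\Big(\prod_{i\in S}\lambda_i\Big)\det(u_S)^2 = 1,$$
where $S$ ranges over the $n$-element subsets of $\{1,\dots,m\}$ and $\det(u_S)$ is the determinant of the matrix with columns $\{u_i\}_{i\in S}$.

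The heart of the argument is to extract from this identity a subset $S$ with $|\det(u_S)|$ large. Writing $D=\max_S|\det(u_S)|$, the identity yields $1\le D^2\,e_n(\lambda_1,\dots,\lambda_m)$, where $e_n$ is the $n$-th elementary symmetric polynomial. By Maclaurin's inequality, $e_n(\lambda)\le \binom{m}{n}\big(\tfrac{1}{m}\sum_i\lambda_i\big)^n=\binom{m}{n}(n/m)^n$, and since the resulting lower bound $(m/n)^n/\binom{m}{n}$ is decreasing in $m$ while $m\le \binom{n+1}{2}$, one obtains
$$D^2 \ge \frac{\left(\tfrac{n+1}{2}\right)^n}{\binom{n(n+1)/2}{n}}.$$
Finally, choosing the maximising $S=\{i_1,\dots,i_n\}$ and writing each $u_{i_j}=\overline z^{\,j}-\underline z^{\,j}$ with $\overline z^{\,j},\underline z^{\,j}\in TK$ (possible because $u_{i_j}\in TK-TK$), Proposition \ref{app} gives $Vol(TK)\ge D/n!$; combining this with $idq(TK)\ge Vol(TK)$ produces the stated bound, and the asymptotic expression follows from Stirling's formula.

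The main obstacle is precisely this extraction step: one must recognise that Cauchy--Binet turns the John identity into a weighted average of squared determinants of $n$-tuples of contact points, and that Maclaurin's inequality, together with the monotonicity in the number $m$ of contact points, is exactly what controls this average and pins down the extremal constant $\binom{n(n+1)/2}{n}$. The normalisation and the passage from a large determinant to a volume via Proposition \ref{app} are comparatively routine.
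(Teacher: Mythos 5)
Your proposal is correct. The paper itself gives no proof of this proposition (it is quoted from \cite{shimura}), but your argument --- John position for the difference body $K-K$, Cauchy--Binet applied to $\sum_i\lambda_iu_iu_i^T=Id_n$ to get $\sum_S\bigl(\prod_{i\in S}\lambda_i\bigr)\det(u_S)^2=1$, Maclaurin's inequality with $\sum_i\lambda_i=n$ and $m\le n(n+1)/2$ to extract a subset with $\det(u_S)^2\ge\bigl(\tfrac{n+1}{2}\bigr)^n/\binom{n(n+1)/2}{n}$, and Proposition~\ref{app} to convert that determinant into volume --- is exactly the Dvoretzky--Rogers-type argument of the cited reference, and it is also precisely the framework the paper sets up in Section~3 and then sharpens in dimension three (replacing the Maclaurin averaging step by the fine analysis of the quantities $a_{ij}$ to reach the optimal constant $1/\sqrt{2}$ instead of $\sqrt{D^2}$ from the averaging bound).
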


    In the three dimensional case they proved that for a convex body $K\subset \mathbb{R}^3$ there exists a linear bijection $T$ such that $idq(TK)\geq \frac{\sqrt{10}}{30}$, which is very close to $\frac{\sqrt{2}}{12}$. In this paper, we prove the Makai's conjecture in dimension three.

    \begin{theorem}
    \label{main-th}
        Let $K$ be a convex body in $\R^{3}$. Then there exists a linear bijection $T:\mathbb{R}^{3}\to \mathbb{R}^{3}$ such that $$
            Vol(TK)\geq \frac{\sqrt{2}}{12}Diam(TK)^3.
        $$      
    \end{theorem}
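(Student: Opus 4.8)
The plan is to reformulate the problem in terms of the difference body and the minimal enclosing ellipsoid, so that the John-type decomposition of the identity (Proposition 1.1) becomes available. Since $Diam(TK) = Diam(T(K-K))/2$ and volumes scale the same way under linear maps, it suffices to choose $T$ so that the minimal volume ellipsoid containing $T(K-K)$ is the unit ball $B$. After this normalisation, $Diam(TK) \le 1$ because $T(K-K) \subset B$ forces the diameter of $TK$ to be at most the radius of $B$; the real content is then to bound $Vol(TK)$ from below by $\frac{\sqrt 2}{12}$. Equivalently, writing $L = \frac12 T(K-K)$, a symmetric convex body whose minimal enclosing ellipsoid is $\frac12 B$, I would aim to prove $Vol(L) \ge \frac{\sqrt 2}{12} \cdot (\tfrac12)^3 \cdot 8 = \ldots$ after tracking the factors from the Rogers--Shephard type comparison between $Vol(K)$ and $Vol(K-K)$; the clean statement to target is a sharp lower bound on the volume of a symmetric body in John position.

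The engine for the lower bound is Proposition 1.1 together with Proposition \ref{app}. Once the identity is decomposed as $\sum_{i=1}^{6}\lambda_i u_i u_i^T = Id_3$ with contact points $u_i \in (K-K)\cap \partial B$, each $u_i$ is a difference $\overline z^{\,i} - \underline z^{\,i}$ of two points of $K$ (up to the scaling by $\tfrac12$ relating $K-K$ and $K$). Proposition \ref{app} then says that any three of these difference vectors $y_1,y_2,y_3$ give $Vol(K) \ge |\det(y_1,y_2,y_3)|/6$. The strategy is therefore to choose, among the six contact vectors $u_1,\dots,u_6$, a triple maximising $|\det(u_i,u_j,u_k)|$ and to show that the decomposition-of-identity constraint forces at least one triple to have determinant bounded below by the value realised by the regular simplex. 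I expect this to come down to an optimisation: minimise over all admissible configurations $\{(\lambda_i,u_i)\}$ satisfying $\sum \lambda_i u_i u_i^T = Id_3$ and $|u_i|=1$ the quantity $\max_{i<j<k}|\det(u_i,u_j,u_k)|$, and to verify the minimum equals $\sqrt 2 \cdot (\text{normalisation})$, attained at the six edge-directions of a regular simplex (equivalently the six main diagonals configuration).

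The main obstacle, and the part demanding genuine work, is this discrete optimisation over six unit vectors constrained to decompose the identity. A naive bound taking the single largest determinant need not reach $\frac{\sqrt 2}{12}$; one must exploit the full rigidity of the constraint $\sum \lambda_i u_i u_i^T = Id_3$, which pins down both the "spread" of the directions (trace and higher moments) and the weights $\lambda_i$. I anticipate a case analysis according to the combinatorial type of the contact configuration — how the six antipodal pairs of directions are distributed — reducing to a small number of extremal families, of which the regular-simplex configuration is the worst case. This is presumably why the paper needs the long Sections 3--8: establishing that every admissible six-vector system contains a large-determinant triple, handling degenerate configurations (fewer than six effective directions, or vectors clustering near a plane) separately, and finally matching the constant to show both the inequality and its sharpness for the simplex. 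The equality analysis — confirming that the regular simplex, whose difference body's contact structure realises the extremal configuration, attains $\frac{\sqrt 2}{12}$ — would be the natural concluding step.
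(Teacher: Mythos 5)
Your reduction coincides with the paper's: put the difference body in John position so that $Diam(TK)=1$, realise the six contact points $u_1,\dots,u_6$ as differences of points of $TK$, and apply Proposition \ref{app} to get $Vol(TK)\geq \frac{1}{6}\max_{i<j<k}|\det(u_i,u_j,u_k)|$, so that the whole theorem rests on showing that some triple of contact vectors has determinant at least $\frac{1}{\sqrt{2}}$. (Your side remark about first proving a sharp volume bound for the symmetric body $L=\frac{1}{2}T(K-K)$ and then transferring it to $K$ would not work: the sharp symmetric bound is attained by the cross-polytope, and converting it back to $K$ costs a Rogers--Shephard factor $\binom{6}{3}=20$, which lands well below $\frac{\sqrt{2}}{12}$. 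You correctly abandon that route in favour of applying Proposition \ref{app} directly to $K$.)

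The genuine gap is that the entire quantitative core --- the claim that every decomposition $\sum_{i=1}^{6}\lambda_i u_iu_i^T=Id_3$ by unit vectors contains a triple with $|\det|\geq\frac{1}{\sqrt{2}}$ --- is left as an anticipated ``case analysis according to the combinatorial type of the contact configuration,'' with no indication of what makes it tractable. This is not a finishing detail; it is the theorem. The paper's mechanism is quite specific and not predictable from your sketch: single out the heaviest contact point $u_6$, pass to the ten quantities $a_{ij}=\det(u_i,u_j,u_6)$, prove that they satisfy five Pl\"ucker-type quadratic relations (Lemma \ref{det-like}) and, via the tight-frame identity for cross products, $\sum_{i<j}\lambda_i\lambda_j a_{ij}^2=1$ (Lemma \ref{id-cor}); the problem then becomes showing that $\sum\lambda_i\lambda_j a_{ij}^2\leq 2$ whenever all $|a_{ij}|\leq 1$ and the relations hold. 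Even that reformulation requires real work: convexity under the rescaling $(u,v)\mapsto(tu,t^{-1}v)$ pushes maximizers to the boundary of the cube, the Pl\"ucker relations then force every maximal set to be either ``peculiar'' or to contain a zero, and each of these families needs its own estimates on sums of products of the $\lambda_i$ (Lemmas \ref{lemma-2}--\ref{lemma3.6}, Sections \ref{s5}--\ref{s8}). Without at least the coordinates $a_{ij}$, the two identities, and a workable classification of extremal configurations, the proposal identifies the right target but does not prove it.
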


    Note that if a convex body $K \subset \mathbb{R}^{n}$ is centrally symmetric, then it is known that there exists a linear bijection $T$ such that $idq(TK)\geq \frac{1}{n!}$  with equality if and only if $TK$ is a regular crosspolytope (see \cite{barthe}).

    The reverse isodiametric inequality has applications in the geometry of numbers. $\Lambda\subset \R^{n}$ is a lattice if $\Lambda = A\mathbb{Z}^n$ for some $A\in GL(n,\mathbb{R})$. We also define $d(\Lambda) := |det A|.$ A lattice of translates of $K$ is defined as $\Lambda + K = \{x+y \text{ } \vert\text{ } x\in \Lambda,y\in K\}$ and its density is defined as $$D(\Lambda, K) := \frac{Vol(K)}{d(\Lambda)}.$$

    We say that a         lattice of translates of $K$ is non-separable if each affine $(n-1)$-subspace in $\R^{n}$ meets $x+K$ for some $x\in\Lambda$. Non-separable lattices were introduced by  L. Fejes T\'oth in \cite{makai1974}, see also \cite{makai1978, makai2016}. 
    \begin{definition} For a convex body $K\subset \mathbb{R}^{n}$ we define 
        $$d_{n,n-1}(K) := \min\{d(\Lambda,K)\text{ }\vert\text{ } \Lambda+K \text{ is non-separable} \}.$$ 
    \end{definition}

    The estimates of $d_{n,n-1}$ are related to the Mahler's conjecture and have been well studied in the two-dimensional case (see \cite{ makai2016, AAA23}). Namely, for a convex body $K\subset \mathbb{R}^2$ we have
    
        $$
            \frac{3}{8}\leq d_{2,1}(K)\leq\frac{\pi \sqrt{3}}{8}, $$ 
    and if $K \subset \mathbb{R}^{2}$ is centrally symmetric, then 
    $$
        d_{2,1}(K)\geq \frac{1}{2}.
    $$
    
    The proof of the Mahler's conjecture in dimension three gives us the estimate  $d_{3,2}(K)\geq \frac{1}{6}$ in the \textit{centrally symmetric} case, but the general case remains open. 
    As a consequence of the reverse isodiametric inequality, we obtain the following result. 
    
    \begin{Corollary}
    \label{d32}
         Let $K$ be a convex body in $\R^{3}$. Then 
         $$
            d_{3,2}(K)\geq \frac{1}{12}.
         $$
    In addition, if $K$ is a simplex, then $d_{3,2}(K) = \frac{1}{12}.$
    \end{Corollary}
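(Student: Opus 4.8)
The plan is to derive Corollary \ref{d32} from the main theorem by connecting the density of a non-separable lattice of translates to the lattice width, and then to the isodiametric quotient. First I would note that the quantity $d_{3,2}(K)$ is invariant under the group of volume-preserving linear maps together with scaling in the appropriate sense: if $T$ is any linear bijection, then $\Lambda + K$ is non-separable if and only if $T\Lambda + TK$ is non-separable, and the density ratio $Vol(K)/d(\Lambda)$ is preserved because both volume and lattice determinant scale by $|\det T|$. Hence $d_{3,2}(TK) = d_{3,2}(K)$ for every linear bijection $T$, so I may replace $K$ by the image $TK$ furnished by Theorem \ref{main-th}, and assume from the outset that $Vol(K) \ge \frac{\sqrt2}{12} Diam(K)^3$.

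Next I would reduce the estimate of $d_{3,2}(K)$ to the lattice-width inequality advertised in the abstract, namely $Vol(K) \ge \frac{1}{12}\omega(K)^3$. The link is Proposition \ref{d32-ball}'s companion, Proposition 1.5: a lattice $\Lambda + K$ is non-separable exactly when, after applying the linear map carrying $\Lambda$ to $\Z^3$, the rescaled body has lattice width at least $1$. Concretely, writing $\Lambda = A\Z^3$ and setting $K' = A^{-1}K$, non-separability of $\Lambda + K$ is equivalent to non-separability of $K' + \Z^3$, which by Proposition 1.5 holds if and only if $\omega(K') \ge 1$. The density is $D(\Lambda,K) = Vol(K)/|\det A| = Vol(K')$. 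Therefore minimizing the density over non-separable lattices is the same as minimizing $Vol(K')$ over linear images $K'$ of $K$ subject to $\omega(K') \ge 1$; by homogeneity this minimum equals the scale-invariant quantity $\min_{T} Vol(TK)/\omega(TK)^3$ taken over linear bijections $T$. Thus $d_{3,2}(K) = \inf_T Vol(TK)/\omega(TK)^3$, and proving $d_{3,2}(K) \ge \frac{1}{12}$ amounts to proving $Vol(TK) \ge \frac{1}{12}\omega(TK)^3$ for all $T$, i.e. the lattice-width inequality.

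The heart is then to compare $\omega(K)$ with $Diam(K)$ and invoke the main theorem. Since $\omega(K) = \min_{u \in \Z^3\setminus 0}\omega_K(u)$ and each $\omega_K(u) = \max_{x,y\in K}\langle u, x-y\rangle$, taking $u$ a primitive integer vector of minimal Euclidean norm (which is $1$, a standard basis vector) gives $\omega(K) \le \omega_K(e_i) = \max_{x,y}\langle e_i, x-y\rangle \le Diam(K)$, because the width in any unit direction is bounded by the diameter. Applying this after the map $T$ from Theorem \ref{main-th}, I obtain $\omega(TK) \le Diam(TK)$, whence
$$
\frac{Vol(TK)}{\omega(TK)^3} \ge \frac{Vol(TK)}{Diam(TK)^3} \ge \frac{\sqrt2}{12} \ge \frac{1}{12}.
$$
Since $d_{3,2}(K)$ equals the infimum of the left-hand quotient over all linear bijections, this yields $d_{3,2}(K) \ge \frac{1}{12}$. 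I expect the main obstacle to be the careful bookkeeping in the second paragraph: establishing that the infimum of density over non-separable lattices genuinely coincides with the scale-invariant volume-to-lattice-width ratio, rather than merely being bounded by it, so that the lower bound transfers cleanly. For the equality case when $K$ is a simplex, I would exhibit the extremal simplex $\Delta$ realizing equality in Theorem \ref{main-th} and check directly that its optimal affine position attains $\omega(\Delta) = Diam(\Delta)$ and $Vol(\Delta) = \frac{\sqrt2}{12}Diam(\Delta)^3$, forcing $Vol(\Delta) = \frac{1}{12}\omega(\Delta)^3$; verifying that the minimizing direction in the definition of $\omega$ is attained by a diameter direction of this specific simplex is the delicate point there.
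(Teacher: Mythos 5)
There is a genuine gap, and it is visible from the conclusion itself: your chain ends with $Vol(TK)/\omega(TK)^3 \ge \frac{\sqrt2}{12} > \frac{1}{12}$, which, if it really bounded $d_{3,2}(K)$, would prove $d_{3,2}(K)\ge \frac{\sqrt2}{12}$ --- contradicting the equality $d_{3,2}(\Delta)=\frac{1}{12}$ for the simplex that the corollary itself asserts. The flaw is in the last step. You correctly identify $d_{3,2}(K)$ with $\inf_{T} Vol(TK)/\omega(TK)^3$ over \emph{all} linear bijections $T$, but then you verify the inequality $Vol(TK)/\omega(TK)^3\ge\frac{1}{12}$ only for the \emph{single} $T$ furnished by Theorem~\ref{main-th}. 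Evaluating the quotient at one point does not bound an infimum from below. For a general $T'$ you have neither $Vol(T'K)\ge\frac{\sqrt2}{12}Diam(T'K)^3$ (the theorem gives this for one position only) nor any way to relate $\omega(T'K)$ back to $\omega(TK)$, since lattice width is not a linear invariant. Equivalently: if $\Lambda+K$ is an arbitrary non-separable lattice of translates, your argument gives no upper bound on $d(\Lambda)$.

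The missing ingredient is exactly the step the paper uses to control $d(\Lambda)$ for \emph{every} non-separable $\Lambda$ simultaneously: pass from $TK+T\Lambda$ to $\frac12(TK-TK)+T\Lambda$, observe that the latter is contained in the ball $B_{\frac12 Diam(TK)}$, so that $B_{\frac12 Diam(TK)}+T\Lambda$ is non-separable, and invoke Proposition~\ref{d32-ball} ($d_{3,2}(B)=\frac{\pi}{6\sqrt2}$) to get $d(T\Lambda)\le \frac{6\sqrt2}{\pi}\,Vol\bigl(B_{\frac12 Diam(TK)}\bigr)$. Combining this with $Vol(TK)\ge\frac{\sqrt2}{12}Diam(TK)^3$ yields $\frac{Vol(K)}{d(\Lambda)}\ge\frac{1}{12}$; the factor $\sqrt2$ is exactly consumed by the ball's density constant, which is why the correct bound is $\frac{1}{12}$ and not $\frac{\sqrt2}{12}$. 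Your inequality $\omega(TK)\le Diam(TK)$ is true but plays no role in closing this gap. For the equality case the paper does not re-derive anything from the extremal simplex of Theorem~\ref{main-th}; it simply cites Proposition~3.1 of the Makai--Martini paper for $d_{3,2}(\Delta)=\frac{1}{12}$, whereas your sketch for that part would need the additional (and nontrivial) verification that some position of the simplex actually attains the bound, which your $\frac{\sqrt2}{12}$ chain cannot produce.
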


    This corollary can be reformulated in terms of the covering minimum. Let $\mathcal{A}_{i}(\mathbb{R}^{n})$ be the family of $i$-dimensional affine subspaces of $\mathbb{R}^{n}$. Then for a convex body $K\subset \mathbb{R}^{n}$ and a lattice $\Lambda \subset \mathbb{R}^{n}$ the $i$-th covering minimum is defined as 
    $$
        \mu_{i}(K, \Lambda) = \inf\{\mu > 0 \text{ }\vert\text{ } (\mu K + \Lambda) \cap L \neq \emptyset \text{ for all $L \in \mathcal{A}_{n-i}(\mathbb{R}^{n})$ }\}.
    $$
    Then we get the following result, which can be seen as a dual analog of Minkowski’s fundamental theorem (see \cite{bernardo}). 
    \begin{Corollary}
    \label{cor1.2}
        For a convex body $K \subset \mathbb{R}^{3}$ and a lattice $\Lambda \subset \mathbb{R}^{3}$ we have 
        $$
            \mu_{1}(K, \Lambda)^{3}Vol(K) \geq \frac{1}{12}\det \Lambda,
        $$
        and equality can hold if $K$ is a simplex.
    \end{Corollary}

    In order to explain the duality we introduce Minkowski’s successive minimum. For a centrally symmetric convex body $K\subset \mathbb{R}^{n}$ and a lattice $\Lambda \subset \mathbb{R}^{n}$ the $i$-th successive minimum is defined as 

    $$
        \lambda_{i}(K,\Lambda) = \min \{\lambda > 0 \text{ }\vert \text{ } dim(\lambda K \cap \Lambda) \geq i\} \text{ for $i=1,\ldots, n$}.
    $$
    For general convex body $K$ this definition is extended by setting $\lambda_{i}(K,\Lambda) := \lambda_{i}(\frac{1}{2}(K-K), \Lambda).$ In these notations 
 Minkowski’s fundamental theorem states that 

$$
    \lambda_{1}(K,\Lambda)^{n}Vol(K)\leq 2^{n}\det \Lambda.
$$

The last step to understand the duality is the identity of Kannan and Lovász \cite{lovasz}. Namely, for a centrally symmetric convex body $K\subset \mathbb{R}^{n}$ one has 
$$
    \lambda_{1}(K,\Lambda)\mu_{1}(K^{\circ}, \Lambda^{\circ}) = \frac{1}{2},
$$
where $K^{\circ} = \{x\in\mathbb{R}^{n} \text{ }\vert\text{ }\langle x,y\rangle\leq 1 \text{ for all }y\in K\}$ is the polar body of $K$ and $\Lambda^{\circ} = \{x\in \mathbb{R}^{n} \text{ }\vert\text{ }\langle x,y\rangle \in \mathbb{Z} \text{ for all }y\in \Lambda\}$ is the polar lattice of $\Lambda$. Thus one can see the polar duality between Corollary \ref{cor1.2} and Minkowski’s theorem.

    The next corollary is an estimate of the volume of a  convex body in terms of its lattice width, which is of independent interest due to its connection to the Newton 
polyhedrons of surfaces with $m$-singularity. 

 \begin{definition} Let $K$ be a convex body in $\R^{n}$. The 
        lattice width $\omega_{K}:\Z^{n}\setminus{0}\to \R$ is defined as 
        $$
        \omega_{K}(u)=\max_{x,y\in K}\langle u, x-y\rangle;
        $$
        $$
            \omega(K) = \min_{u\in \mathbb{Z}^{n}\setminus 0} \omega_{K}(u).
        $$
    \end{definition}

There are estimates of the area of the Newton polygons in two dimensions (see \cite{nk1,nk2}), but in three dimensions the exact estimates remained unknown. We prove the following result.  
    \begin{Corollary}
    \label{lattice-width}
        Let $K$ be a convex body in $\R^{3}$. Then 
         $$
            Vol(K)\geq \frac{1}{12}\omega(K)^3.
         $$
    In addition, if $K = conv\left\{(0,0,0),  \left(1,\frac{1}{2},\frac{1}{2}\right),\left(\frac{1}{2},1,\frac{1}{2}\right),\left(\frac{1}{2},\frac{1}{2},1\right)\right\}$, then $Vol(K) = \frac{1}{12}\omega(K)^3.$
    \end{Corollary}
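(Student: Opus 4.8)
The plan is to extract the inequality directly from Corollary \ref{d32} via the non-separability dictionary, and then to verify the extremal simplex by an explicit computation. First I would exploit homogeneity: both sides of $Vol(K)\ge \frac{1}{12}\omega(K)^3$ scale the same way under a dilation, since $Vol(\lambda K)=\lambda^{3}Vol(K)$ and $\omega(\lambda K)=\lambda\,\omega(K)$ for $\lambda>0$. Hence I may rescale $K$ so that $\omega(K)=1$, after which it suffices to prove $Vol(K)\ge \frac{1}{12}$.

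Once $\omega(K)=1$, the non-separability criterion (that $K+\Z^{3}$ is non-separable if and only if $\omega(K)\ge 1$) shows that the lattice $\Z^{3}$ makes the family of translates of $K$ non-separable. Thus $\Z^{3}$ is admissible in the minimization defining $d_{3,2}(K)$, and because $d(\Z^{3})=1$ this gives $d_{3,2}(K)\le D(\Z^{3},K)=Vol(K)$. Combining this with Corollary \ref{d32}, namely $d_{3,2}(K)\ge \frac{1}{12}$, yields $Vol(K)\ge d_{3,2}(K)\ge \frac{1}{12}$, which is the claim after undoing the rescaling. In fact this argument, together with the identity $d_{3,2}(K)=\min\{Vol(AK): A\in GL(3,\R),\ \omega(AK)\ge 1\}$, shows that Corollaries \ref{d32} and \ref{lattice-width} are equivalent reformulations of one another.

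For the equality case I would compute both quantities for $\Delta=\mathrm{conv}\{0,v_{1},v_{2},v_{3}\}$ with $v_{1}=(1,\tfrac12,\tfrac12)$, $v_{2}=(\tfrac12,1,\tfrac12)$, $v_{3}=(\tfrac12,\tfrac12,1)$. The volume is immediate: $Vol(\Delta)=\frac{1}{6}\,|\det(v_{1},v_{2},v_{3})|=\frac{1}{6}\cdot\frac{1}{2}=\frac{1}{12}$, the determinant equalling $\tfrac12$ because the matrix is $\tfrac12(I+J)$ with $J$ the all-ones matrix, hence has eigenvalues $2,\tfrac12,\tfrac12$. It then remains to prove $\omega(\Delta)=1$. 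Writing $u=(a,b,c)\in\Z^{3}$ and $s=a+b+c$, the values $\langle u,v_{i}\rangle$ at the four vertices are $0,\tfrac12(a+s),\tfrac12(b+s),\tfrac12(c+s)$, so that $2\,\omega_{\Delta}(u)$ equals the spread $\max\{0,p,q,r\}-\min\{0,p,q,r\}$ of the integers $p=a+s$, $q=b+s$, $r=c+s$, which satisfy $p+q+r=4s$. A spread of at most $1$ would force $0,p,q,r$ into two consecutive integers, hence $p,q,r\in\{-1,0,1\}$ and $|4s|=|p+q+r|\le 3$, so $s=0$ and then $p=q=r=0$, i.e. $u=0$. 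Therefore $\omega_{\Delta}(u)\ge 1$ for every $u\neq 0$, with equality at $u=e_{1}$, so $\omega(\Delta)=1$ and $Vol(\Delta)=\frac{1}{12}=\frac{1}{12}\omega(\Delta)^{3}$.

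Given Corollary \ref{d32}, the inequality is a one-line deduction, so the only part demanding genuine care is the equality case, and within it the lower bound $\omega(\Delta)\ge 1$ — this is where the integrality of $u$ must actually be used, rather than merely pointing to the width-$1$ direction $e_{1}$. I expect this short integrality argument to be the main, and essentially the only, obstacle.
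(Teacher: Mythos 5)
Your proof is correct, and for the inequality it follows essentially the same route as the paper: both deduce $Vol(K)\ge\frac{1}{12}\omega(K)^3$ from the bound $d_{3,2}(K)\ge\frac{1}{12}$ (itself a consequence of Theorem~\ref{main-th} together with the non-separability criterion and Proposition~\ref{d32-ball}), using that $\omega(K)\ge 1$ makes $K+\Z^3$ non-separable; the paper merely leaves the homogeneity rescaling implicit where you spell it out. The one place you genuinely diverge is the equality case: the paper disposes of it by citing Proposition 3.1 of \cite{makai2016} for the statement $d_{3,2}(\text{simplex})=\frac{1}{12}$ and does not separately verify the explicit simplex named in Corollary~\ref{lattice-width}, whereas you give a self-contained check --- the determinant $\det\bigl(\tfrac12(I+J)\bigr)=\tfrac12$ for the volume, and the integrality argument with $p=a+s$, $q=b+s$, $r=c+s$, $p+q+r=4s$ forcing $u=0$ whenever the spread is at most $1$, for the width. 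That computation is sound (the spread is an integer, so $\omega_\Delta(u)<1$ indeed forces spread $\le 1$) and is arguably worth having in the paper, since the citation only covers the $d_{3,2}$ formulation and not directly the lattice-width extremal example.
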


    It is no coincidence that the coefficient $\frac{1}{12}$ is the same in both corollaries. The following relation between non-separable lattices and lattice widths was obtained in \cite{makai1974}.

    \begin{proposition}
    [T\' oth-Makai, \cite{makai1974}]
    \label{nonsep}  Let $K$ be a convex body in $\R^{n}$.
        $K+\Z^{n}$ is non-separable if and only if $\omega(K)\geq 1$.  And therefore $K+\Z^{n}$ is non-separable if and only if $\frac{1}{2}(K-K)+\Z^{n}$ is non-separable.  
    \end{proposition}

\end{section}

\begin{section}{The scheme of the proof}

  In this section we outline the proof of Theorem~\ref{main-th}. In the proof, we will need to estimate the volume of the convex hull of the segments. First, recall the following well-known assertion.

\begin{proposition} 
    \label{app}
    
    Let $K\subset \mathbb{R}^{d}$ be a convex body and $\underline{z}^{i},\overline{z}^{i}\in K$ for $i =1,\ldots,d.$ And let $y_{i} = \overline{z}^{i}-\underline{z}^{i}.$ Then it holds that

    $$
        Vol(K)\geq \frac{|det(y_{1},\ldots,y_{d})|}{d!}.
    $$
        
    \end{proposition}

    \begin{proof}[Proof of Proposition \ref{app}]
        Let us prove this statement by induction. If $d=1$ then there is nothing to prove. Assume that $d>1$ and let $\Pi := y_{d}^{\perp}$ and $P$ be the orthogonal projection onto $\Pi$. Then $$|det(y_{1},\ldots,y_{d})| = |y_{d}|\cdot|det(Py_{1},\ldots,Py_{d-1})|$$ and by (1.3) in \cite{zhang} we have 
        $$
            Vol(K)\geq \frac{1}{d}|y_{d}|Vol(PK) \geq \frac{1}{d}|y_{d}|\frac{|det(Py_{1},\ldots,Py_{d-1})|}{(d-1)!} = \frac{|\det(y_{1},\ldots,y_{d})|}{d!}.
        $$
    
    \end{proof}

        Further, we define the required linear mapping $T$. Let $\mathcal{E}$ be the minimal volume ellipsoid containing $K-K$  and $B$ be the unit ball in $\mathbb{R}^{3}.$ $T$ is a linear bijection such that $T\mathcal{E} = B.$ Then by the famous John's theorem \cite {ball} we get the existence of contact points $u_{1},\ldots,u_{6}\in (TK-TK)\cap \partial B$ and \textit{non-negative numbers} $\lambda_{1},\ldots,\lambda_{6}, $ such that 

         $$
            \sum_{i=1}^{6}\lambda_{i}u_{i}u_{i}^{T} = Id_{3},
        $$
        where we denoted the identity matrix $3\times 3$ by $Id_{3}$. Then $TK$ contains shifted copies of $u_{1},\ldots,u_{6}$ and $Diam(TK)=1$. Our goal now is to show that $Vol(K)\geq \frac{\sqrt{2}}{12}$. Therefore by Proposition~\ref{app} it suffices to prove that $\max_{i,j,k}|det(u_{i},u_{j},u_{k})|\geq \frac{1}{\sqrt{2}}.$ 

        From the trace comparison we get $\sum_{i=1}^{6} \lambda_{i} = 3.$ Let $\lambda_{6} = \max_{i}\lambda_{i}$ and  $a_{ij}:=det(u_{i},u_{j},u_{6}).$ Then by Lemma~\ref{det-like} and Lemma~\ref{id-cor} we have the following equalities. 

        $$
            a_{13}a_{24} - a_{14}a_{23} = a_{12}a_{34};
        $$
        $$
            a_{13}a_{25} - a_{15}a_{23} = a_{12}a_{35};
        $$
        $$
            a_{14}a_{25} - a_{15}a_{24} = a_{12}a_{45};
        $$
        $$
            a_{14}a_{35} - a_{15}a_{34} = a_{13}a_{45};
        $$
        $$
            a_{24}a_{35} - a_{25}a_{34} = a_{23}a_{45};
        $$

        $$
            \sum_{1\leq i<j\leq 5} \lambda_{i}\lambda_{j}a_{ij}^2= 1.
        $$
        Under this restrictions it suffices to prove that $\max_{i,j}|a_{ij}|\geq \frac{1}{\sqrt{2}}.$ It turns out to be more convenient to solve the following problem:

        \textbf{The optimization problem.} Assume that $\lambda_{1},\ldots, \lambda_{6} \geq 0$ are fixed,  $\sum_{i=1}^{6}\lambda_{i} =3 $ and $\lambda_{6} = \max_{i}\lambda_{i}$. Let $\{a_{ij}\}_{1\leq i<j\leq 5}$ be a set of $10$ numbers, such that the list of equalities from Lemma~\ref{det-like} holds and $\max_{1\leq i<j\leq 5} |a_{ij}| \leq 1$. Find the maximum value of $$\sum_{1\leq i<j\leq 5}\lambda_{i}\lambda_{j}a_{ij}^2.$$ 
        If the maximum value does not exceed $2$, then we win. Our goal is to prove this. 
        
    \begin{definition}
        We say that the set $\{a_{ij}\}_{1\leq i<j\leq 5}$ is admissible, if  $\max_{1\leq i<j\leq 5} |a_{ij}| \leq 1$ and the following holds.
        $$
            a_{13}a_{24} - a_{14}a_{23} = a_{12}a_{34};
        $$
        $$
            a_{13}a_{25} - a_{15}a_{23} = a_{12}a_{35};
        $$
        $$
            a_{14}a_{25} - a_{15}a_{24} = a_{12}a_{45};
        $$
        $$
            a_{14}a_{35} - a_{15}a_{34} = a_{13}a_{45};
        $$
        $$
            a_{24}a_{35} - a_{25}a_{34} = a_{23}a_{45}.
        $$
    
    \end{definition}

\begin{definition} Assume that $\lambda_{1},\ldots, \lambda_{6}$ are fixed. 
    We say that the admissible set  $\{a_{ij}\}_{1\leq i<j\leq 5}$ is maximal if it maximizes the value of  $\sum_{1\leq i<j\leq 5}\lambda_{i}\lambda_{j}a_{ij}^2$ among all admissible sets.
\end{definition}

The proof of the estimate is divided into four parts. The first part, which is Section \ref{s5}, is related to the case where there is at least one zero among $\lambda_{1},\ldots,\lambda_{6}.$ The second part, which is Section \ref{s6}, is related to finding special conditions for maximal sets. If all $\lambda_{i}$ are positive, then there are only two types of maximal sets: "peculiar sets"~\ref{def-bad} and those with at least one zero. The third part, which is Section \ref{s7}, is related to the case where the set $\{a_{ij}\}$ is "peculiar". And the last fourth part, which is Section \ref{s8} is related to the case where the set $\{a_{ij}\}$ contains zero. Each of the cases is based on estimates of sums of lambda products, which are proved in the Lemmas~\ref{lemma-2}, \ref{lemma3.4}, \ref{lemma3.5}, \ref{lemma3.6}. After solving all this cases, Theorem~\ref{main-th} will be proved. 

Summing up, the remaining steps are the following:

\begin{enumerate}
    \item Solve the case $\lambda_{1}=0.$ It is done in Proposition~\ref{lambda-zero-case}.
    \item If $\lambda_{i}>0$ for all $i$, prove that there are only two types of maximal sets: "peculiar sets" and those with at least one zero. It is done in Proposition~\ref{max-form}. 
    \item Solve the case of peculiar sets. It is done in Proposition~\ref{bad-case}.
    \item Solve the case  $a_{12}=0$. It is done in Proposition~\ref{zero-case}.
    
\end{enumerate}

    During proofs, the words \textbf{we may ignore some  expression} will often be used. What we mean by that words is the following. Assume, for example, that we have a function $$(x,y,z)\mapsto ax^2 + by^2 + cz^2$$ with the domain $[-1;1]^{3}\cap \{x+y+z=0\}$ and $a,b,c > 0$. We want to estimate it as $a+b+c - \min(a,b,c).$ We know the obvious estimate $a+b+c$, but we want to be more precise. This function is convex, therefore it suffices to consider only 
    
    $$(x,y,z)\in \{(1,-1,0),(-1,1,0),(1,0,-1),(-1,0,1),(0,1,-1), (0,-1,1)\}.$$  
    In each of this cases either $a$ or $b$, or $c$ comes with zero coefficient. And this means that we may ignore either $a$ or $b$, or $c$.

    \begin{proof}[Proof of Corollaries \ref{d32}, \ref{cor1.2} and \ref{lattice-width}]
        This reasoning is well known, we present it for completeness.

        Let $K + \Lambda$ is non-separable and let $T:\mathbb{R}^3\to \mathbb{R}^3.$ be a linear bijection such that $$Vol(TK)\geq \frac{\sqrt{2}}{12}Diam(TK)^3.$$
        Then $TK + T\Lambda$ is non-separable and therefore by Proposition \ref{nonsep} $\frac{1}{2}(TK-TK) + T\Lambda$ is non-separable. Since $\frac{1}{2}(TK-TK)\subset B_{\frac{1}{2}Diam(TK)}$, $B_{\frac{1}{2}Diam(TK)} + T\Lambda$ is non-separable.
        
        Therefore its density is at least $\frac{\pi}{6\sqrt{2}}$ by Theorem 4 of \cite{makai1978}. Then we have the following chain of inequalities.
        $$
            \frac{Vol(K)}{det \Lambda} = \frac{Vol(TK)}{det T\Lambda} \geq \frac{\sqrt{2}}{12}\frac{Diam(TK)^3}{det (T\Lambda)} = \frac{8\sqrt{2}}{12\frac{4}{3}\pi}\frac{Vol(B_{\frac{1}{2}Diam(TK)})}{det (T\Lambda)} \geq \frac{8\sqrt{2}}{12\frac{4}{3}\pi}\frac{\pi}{6\sqrt{2}} = \frac{1}{12}.
        $$
        Therefore $d_{32}(K)\geq \frac{1}{12}.$ And, since $K+\mathbb{Z}^3$ is non-separable if and only if $\omega(K)\geq 1$, we have  $Vol(K)\geq \frac{1}{12}\omega(K)^3.$  If $K$ is a simplex, then  $d_{3,2}(K) = \frac{1}{12}$ by Proposition 3.1 in \cite{makai2016}.
        
    \end{proof}

\begin{section}{Preliminary calculations}

    Let $u_{1},\ldots,u_{6}$ be unit vectors in $\mathbb{R}^{3}$ and for some non-negative 
    $\lambda_{1},\ldots,\lambda_{6}$ we have 
    $$
        \sum_{i=1}^{6}\lambda_{i}u_{i}u_{i}^{T} = Id_{3}.
    $$    
Then from the trace comparison we get $\sum_{i=1}^{6} \lambda_{i} = 3.$ Note that $u_{i}u_{i}^{T}\in \mathbb{R}^{3\times 3}.$
    
    \begin{notation}
        We denote $det(u_{i},u_{j},u_{6})$ by $a_{ij}$. Note that $a_{ij} = -a_{ji}.$
    \end{notation}

    We will need some properties of $a_{ij}$. Firstly, we have determinant-like equations:
    
    \begin{lemma}
    \label{det-like}
        
    The following list of equalities holds.
        $$
            a_{13}a_{24} - a_{14}a_{23} = a_{12}a_{34};
        $$
        $$
            a_{13}a_{25} - a_{15}a_{23} = a_{12}a_{35};
        $$
        $$
            a_{14}a_{25} - a_{15}a_{24} = a_{12}a_{45};
        $$
        $$
            a_{14}a_{35} - a_{15}a_{34} = a_{13}a_{45};
        $$
        $$
            a_{24}a_{35} - a_{25}a_{34} = a_{23}a_{45}.
        $$
    \end{lemma}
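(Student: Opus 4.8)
The plan is to reduce all five equalities to a single universal identity for the $2\times 2$ minors of a $2\times n$ matrix (a three-term Pl\"ucker relation) and then prove that identity by a short determinantal argument.

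First I would observe that $a_{ij}=\det(u_i,u_j,u_6)$ is unchanged when any multiple of $u_6$ is added to $u_i$ or to $u_j$, so $a_{ij}$ depends only on the images of $u_i,u_j$ in the quotient plane $\mathbb{R}^3/\langle u_6\rangle$. Concretely, pick any linear bijection $M$ with $Mu_6=e_3$. Since $\det(Mu_i,Mu_j,Mu_6)=\det(M)\,a_{ij}$ and each of the five equalities is homogeneous of degree two in the $a$'s, the factor $\det(M)$ cancels on both sides, so we may assume $u_6=e_3$. Writing $u_k=(x_k,y_k,z_k)$ and expanding the determinant along its third column gives the clean formula $a_{ij}=x_iy_j-x_jy_i$; that is, $a_{ij}$ equals the $2\times 2$ minor $[ij]:=\det\!\bigl(\begin{smallmatrix}x_i&x_j\\ y_i&y_j\end{smallmatrix}\bigr)$ on columns $i,j$ of the $2\times 5$ matrix with columns $(x_k,y_k)^{T}$.

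Next I would record the single underlying identity: for any four indices $i<j<k<l$ the minors satisfy
$$[ik][jl]-[il][jk]=[ij][kl].$$
Each of the five displayed equalities is exactly this relation for one of the five $4$-element subsets of $\{1,2,3,4,5\}$ (for instance $\{1,2,3,4\}$ gives $a_{13}a_{24}-a_{14}a_{23}=a_{12}a_{34}$, and $\{2,3,4,5\}$ gives $a_{24}a_{35}-a_{25}a_{34}=a_{23}a_{45}$), so proving it once settles the whole lemma. To prove it, I would consider the $4\times 4$ matrix whose first two rows are $(x_i,x_j,x_k,x_l)$ and $(y_i,y_j,y_k,y_l)$ and whose last two rows repeat these; its determinant vanishes because it has equal rows. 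Expanding this determinant by the Laplace rule along the first two rows produces exactly $2\bigl([ij][kl]-[ik][jl]+[il][jk]\bigr)$, and setting it to $0$ yields the claimed relation. (Equivalently, one may simply expand both sides of one equality in the $x_k,y_k$ and match terms.)

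The computation is entirely routine, and there is no genuine obstacle. The only points needing care are the reduction to the quotient plane, where I must check that the common determinantal factor $\det(M)$ is harmless, which it is precisely because the identities are quadratic, and the sign bookkeeping in the Laplace expansion. In the end the lemma is exactly the classical three-term Pl\"ucker relation for $2\times 2$ minors.
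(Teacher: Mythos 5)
Your proposal is correct and follows essentially the same route as the paper: normalize $u_6=e_3$, recognize $a_{ij}$ as the $2\times 2$ minor $x_iy_j-x_jy_i$, and verify the quadratic identities. The paper leaves the final step as a ``direct calculation,'' which you carry out cleanly via the three-term Pl\"ucker relation (Laplace expansion of a rank-deficient $4\times 4$ determinant), and your justification of the normalization via homogeneity is a valid, slightly more general version of the paper's implicit rotation argument.
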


    \begin{proof}
       We can assume that $u_{6} = (0,0,1)^{T}$. Let us rewrite $u_{i}$ as $ u_{i} = (x_{i},y_{i},*).$
        Then we have
        $$
            a_{12} = x_{1}y_{2}-x_{2}y_{1}; 
        $$
        $$
            a_{13} = x_{1}y_{3}-x_{3}y_{1};
        $$
        $$
            a_{14} = x_{1}y_{4}-x_{4}y_{1};
        $$
        $$
            a_{23} = x_{2}y_{3}-x_{3}y_{2};
        $$
         $$
            a_{24} = x_{2}y_{4}-x_{4}y_{2};
        $$
        $$
            a_{34} = x_{3}y_{4}-x_{4}y_{3}.
        $$
        Direct calculation completes the proof.
    \end{proof}

    Let $u,v$ be two vectors  in $ \mathbb{R}^{3}$. We denote their \textbf{cross product} as $[u;v]$. 
     Then from the decomposition of the identity matrix, we obtain the following:

    \begin{lemma}
    \label{id-cor}
        $$
            \sum_{1\leq i<j\leq 5} \lambda_{i}\lambda_{j}a_{ij}^2= 1.
        $$
    \end{lemma}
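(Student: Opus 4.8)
The plan is to reduce this three-dimensional identity to a two-dimensional determinant computation by projecting onto the plane orthogonal to $u_6$. First I would observe that everything in the statement is invariant under rotations: if $R\in SO(3)$, then replacing each $u_i$ by $Ru_i$ leaves the decomposition unchanged, since $R\,Id_3\,R^{T}=Id_3$, and leaves each $a_{ij}=\det(u_i,u_j,u_6)$ unchanged, since $\det R=1$. Hence, exactly as in the proof of Lemma~\ref{det-like}, I may assume $u_6=(0,0,1)^{T}$ and write $u_i=(x_i,y_i,z_i)^{T}$, so that $a_{ij}=x_iy_j-x_jy_i$.

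Next I would pass to the planar projections $v_i:=(x_i,y_i)^{T}\in\mathbb{R}^2$. The key structural remark is that the upper-left $2\times 2$ block of $u_iu_i^{T}$ is precisely $v_iv_i^{T}$, so taking the upper-left block of the identity $\sum_{i=1}^{6}\lambda_i u_iu_i^{T}=Id_3$ yields
$$
\sum_{i=1}^{6}\lambda_i v_iv_i^{T}=Id_2.
$$
Under this reduction $a_{ij}=x_iy_j-x_jy_i$ is simply the $2\times 2$ determinant of the projected vectors, and $a_{i6}=0$ for every $i$ because $v_6=0$. This last point is what allows me to replace the full sum over $1\le i<j\le 6$ by the sum over $1\le i<j\le 5$.

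Finally I would take determinants of both sides and expand by the Lagrange (Cauchy--Binet) identity in two variables:
$$
\det\Big(\sum_{i=1}^{6}\lambda_i v_iv_i^{T}\Big)=\Big(\sum_i\lambda_i x_i^2\Big)\Big(\sum_j\lambda_j y_j^2\Big)-\Big(\sum_i\lambda_i x_iy_i\Big)^2=\sum_{1\le i<j\le 6}\lambda_i\lambda_j\,(x_iy_j-x_jy_i)^2,
$$
where the diagonal terms cancel and each off-diagonal pair $(i,j),(j,i)$ combines into a perfect square $\lambda_i\lambda_j(x_iy_j-x_jy_i)^2$. Since the left-hand side equals $\det(Id_2)=1$ and the terms with $j=6$ vanish, this reads exactly as $\sum_{1\le i<j\le 5}\lambda_i\lambda_j a_{ij}^2=1$, which is the claim.

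I would emphasize that there is no genuine obstacle in this argument; the only thing to get right is the reduction to the projection onto $u_6^{\perp}$ together with the observation that in these coordinates $a_{ij}$ becomes the planar determinant $\det(v_i,v_j)$. After that, the result is precisely the two-variable Cauchy--Binet expansion applied to $\sum_i\lambda_i v_iv_i^{T}=Id_2$.
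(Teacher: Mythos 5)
Your proof is correct, but it takes a genuinely different route from the paper. The paper invokes the tight-frame identity $\sum_{1\leq i<j\leq 6}\lambda_{i}\lambda_{j}[u_{i};u_{j}][u_{i};u_{j}]^{T}=Id_{3}$ from \cite{g-ivanov} and then contracts twice with $u_{6}$, using $\langle[u_{i};u_{j}],u_{6}\rangle=\det(u_{i},u_{j},u_{6})=a_{ij}$ and the fact that $[u_{i};u_{6}]\perp u_{6}$ to drop the index-$6$ terms. You instead normalize $u_{6}=(0,0,1)^{T}$, extract the upper-left $2\times 2$ block of $\sum_{i}\lambda_{i}u_{i}u_{i}^{T}=Id_{3}$ to get $\sum_{i}\lambda_{i}v_{i}v_{i}^{T}=Id_{2}$ for the planar projections $v_{i}$, and then take determinants via the Lagrange/Cauchy--Binet expansion, with the $j=6$ terms vanishing because $v_{6}=0$. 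All the steps check out: the rotation invariance is legitimate, $a_{ij}$ does equal $\det(v_{i},v_{j})$ in these coordinates by cofactor expansion along the third column, and the identity $\det\bigl(\sum_{i}\lambda_{i}v_{i}v_{i}^{T}\bigr)=\sum_{i<j}\lambda_{i}\lambda_{j}\det(v_{i},v_{j})^{2}$ is exactly the two-variable Cauchy--Binet formula. What your approach buys is self-containedness: it replaces the external reference to the cross-product frame identity with an elementary $2\times 2$ determinant computation. What the paper's approach buys is that it works without choosing coordinates and records the stronger matrix-valued identity, of which this lemma is the $(u_{6},u_{6})$-entry; the two arguments are in fact two faces of the same fact, since the cross-product identity is the second compound (adjugate) of the John decomposition.
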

    
    \begin{proof}
        From \cite{g-ivanov} we have the following equality:

        $$
            \sum_{1\leq i<j\leq 6}\lambda_{i}\lambda_{j}[u_{i}; u_{j}][u_{i}; u_{j}]^{T} = Id_{3},
        $$
        where $[u_{i}; u_{j}]$ is the cross product of $u_{i},u_{j}$.
        Then since $|u_{6}|=1$ we get

        $$
            \sum_{1\leq i<j\leq 6}\lambda_{i}\lambda_{j}[u_{i}; u_{j}]\langle[u_{i}; u_{j}], u_{6} \rangle = u_{6}
        $$

        $$
         \sum_{1\leq i<j\leq 5} \lambda_{i}\lambda_{j}a_{ij}^2 = \sum_{1\leq i<j\leq 5}\lambda_{i}\lambda_{j}\langle[u_{i}; u_{j}], u_{6} \rangle ^{2}= 1.
        $$
        
    \end{proof}
    
    Now let us move on to optimizing the sum of lambda products, which we will often use in the main proof.

    \begin{lemma}
    \label{lemma-2}
        Let $\lambda_{6} = \max_{i}\lambda_{i}$ and $k,l,m,n$ be four different  numbers from $\{1,2,3,4,5\}$.Then we have 
        $$
            \sum_{1\leq i<j\leq 5}\lambda_{i}\lambda_{j} - \lambda_{k}\lambda_{l} - \lambda_{m}\lambda_{n}\leq 2.
        $$
    \end{lemma}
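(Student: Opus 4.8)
The plan is to reduce this five-variable quadratic to a one-dimensional optimization, exploiting the two facts available about the $\lambda_i$: the trace identity $\sum_{i=1}^{6}\lambda_i = 3$, and that $\lambda_6 = \max_i \lambda_i$, which forces $\lambda_6 \ge \tfrac12$ and $\lambda_p \le \lambda_6$ for every $p \in \{1,\dots,5\}$.

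First I would isolate the unique index $p \in \{1,2,3,4,5\}$ that does not occur among $k,l,m,n$ and rewrite the expression by tracking which products survive the subtraction. Since $\{k,l\}$ and $\{m,n\}$ are disjoint pairs, deleting them from $\sum_{1\le i<j\le 5}\lambda_i\lambda_j$ leaves precisely the four pairs containing $p$ together with the four ``cross'' pairs between $\{k,l\}$ and $\{m,n\}$. Writing $a=\lambda_k+\lambda_l$, $b=\lambda_m+\lambda_n$ and $t=a+b$, this collapses to the compact form $\lambda_p t + ab$.

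Next I would bound $ab \le t^2/4$ by AM--GM and substitute $\lambda_p = 3 - \lambda_6 - t$, which turns the target into a function of $t$ alone, $h(t) = (3-M)t - \tfrac34 t^2$ with $M := \lambda_6$. The admissible range is $t \in [\max(0,\,3-2M),\,3-M]$, where the lower endpoint $3-2M$ encodes exactly the constraint $\lambda_p \le M$. As $h$ is a downward parabola with vertex $t^{\ast} = \tfrac{2}{3}(3-M)$, the maximization splits into two cases: for $M \ge \tfrac34$ the vertex is admissible and $h \le \tfrac{(3-M)^2}{3} \le \tfrac{27}{16} < 2$; for $M < \tfrac34$ the maximum sits at $t = 3-2M$, where a short computation yields $h = \tfrac94 - M^2$.

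The conclusion then follows from $M \ge \tfrac12$: in the second case $\tfrac94 - M^2 \le \tfrac94 - \tfrac14 = 2$, with equality exactly at $M = \tfrac12$, so the quantity is at most $2$ throughout. I do not expect a genuine obstacle here; the only delicate point is bookkeeping the feasible interval for $t$ correctly---above all, encoding ``$\lambda_6$ is the maximum'' as the lower bound $t \ge 3-2M$, which is precisely what prevents the weaker estimate $\tfrac{(3-M)^2}{3}$ (equal to $\tfrac{25}{12} > 2$ at $M = \tfrac12$) from becoming the final answer.
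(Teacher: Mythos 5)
Your proof is correct and follows essentially the same route as the paper's: both reduce the left-hand side to $\lambda_p(a+b)+ab$ with $a=\lambda_k+\lambda_l$, $b=\lambda_m+\lambda_n$ and $p$ the omitted index, and then finish by an elementary constrained quadratic optimization using $\sum_i\lambda_i=3$ together with $\lambda_6=\max_i\lambda_i$. The only difference is cosmetic: you bound $ab\le t^2/4$ by AM--GM and optimize in the single variable $t=a+b$ (with the key constraint $t\ge 3-2\lambda_6$), whereas the paper first reduces to the minimal pairing $\lambda_1\lambda_4+\lambda_2\lambda_3$ and analyzes the two-variable quadratic on the boundary of its feasible region.
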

    
    \begin{proof}
        We can assume that $\lambda_{1}\leq \lambda_{2}\leq \ldots \leq \lambda_{5}$ and since under such  assumptions $\lambda_{1}\lambda_{4} + \lambda_{2}\lambda_{3}$ is the smallest sum, we can also put $n=1,m=4,k=2,l=3$. 
         $$
            (\lambda_{1} + \lambda_{4})(\lambda_{2}+\lambda_{3} + \lambda_{5}) + \lambda_{5}(\lambda_{2} + \lambda_{3}) \leq (\lambda_{1} + \lambda_{4})\left(\lambda_{2}+\lambda_{3} + \frac{\lambda_{5}+\lambda_{6}}{2}\right) + \frac{\lambda_{5}+\lambda_{6}}{2}(\lambda_{2} + \lambda_{3})  
    $$

    Let $a = \lambda_{1} + \lambda_{4}$, $b = \lambda_{2} + \lambda_{3}$. Then $a+2b\leq 3$ and $2a+b\leq 3$. Our goal is to estimate the following expression
    $$
        a\left(b+\frac{3}{2} - \frac{a+b}{2}\right) + \left(\frac{3}{2} - \frac{a+b}{2}\right)b = \frac{3}{2}(a+b) - \frac{1}{2}(a^{2}+b^{2}) =: F(a,b).
    $$

    It suffices to consider the case $0\leq a\leq b$ and $a+2b \leq 3$, in other words, the case $(a,b) \in conv\{(0,0),(0,3/2), (1,1)\}$. First of all, the only point with $\frac{\partial}{\partial a}F(a,b) = \frac{\partial}{\partial b}F(a,b) = 0$ is $(\frac{3}{2},\frac{3}{2}) \not\in conv\{(0,0),(0,3/2), (1,1)\}$. Therefore it suffices to prove the inequality only for $(a,b) \in \partial conv\{(0,0),(0,3/2), (1,1)\}.$ 
    
     If $a+2b=3$, then we have 
    $$
        \frac{3}{2}(a+b) - \frac{1}{2}(a^{2}+b^{2}) = \frac{9}{2}b - \frac{5}{2}b^2\leq 2 \text{ for $b\geq 1$.}
    $$
    
    If $a=0$, then we have 

     $$
        \frac{3}{2}(a+b) - \frac{1}{2}(a^{2}+b^{2}) = \frac{3}{2}b - \frac{1}{2}b^2 \leq \frac{9}{8}<2.
    $$
    
    If $a=b$, then we have 

     $$
        \frac{3}{2}(a+b) - \frac{1}{2}(a^{2}+b^{2}) = 3a - a^2 \leq 2 \text{ for $a\leq 1$.}
    $$
    \end{proof}
    
    \begin{lemma}
    \label{lemma3.4}
        Let $\lambda_{6} = \max_{i}\lambda_{i}$ and $k,l,n$ be three different  numbers from $\{1,2,3,4,5\}$. Then we have 
        $$
            \sum_{1\leq i<j\leq 5}\lambda_{i}\lambda_{j} - \lambda_{k}\lambda_{l} - \lambda_{l}\lambda_{n} - \lambda_{k}\lambda_{n}\leq \frac{9}{5} < 2.
        $$
    \end{lemma}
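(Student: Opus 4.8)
The plan is to collapse the five-variable quadratic to a one-variable optimisation, the decisive ingredient being that $\lambda_6$ is the largest weight.

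I would first rewrite the left-hand side transparently. Let $\{p,q\}=\{1,2,3,4,5\}\setminus\{k,l,n\}$ be the two indices absent from the subtracted products. Since $\lambda_k\lambda_l+\lambda_l\lambda_n+\lambda_k\lambda_n$ is exactly the sum of the three pairwise products inside $\{k,l,n\}$, removing them from $\sum_{1\le i<j\le 5}\lambda_i\lambda_j$ leaves the product $\lambda_p\lambda_q$ together with all six products between $\{k,l,n\}$ and $\{p,q\}$, that is
$$
\sum_{1\le i<j\le 5}\lambda_i\lambda_j-\lambda_k\lambda_l-\lambda_l\lambda_n-\lambda_k\lambda_n=\lambda_p\lambda_q+(\lambda_p+\lambda_q)(\lambda_k+\lambda_l+\lambda_n).
$$
In particular the inside triple enters only through its sum, so the problem has essentially two free "outside'' weights.

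Next I would set $m:=\lambda_6$ and $\sigma:=3-m$; recalling $\sum_{i=1}^{6}\lambda_i=3$, we get $\lambda_k+\lambda_l+\lambda_n=\sigma-(\lambda_p+\lambda_q)$, and writing $t:=\lambda_p+\lambda_q$ the quantity to bound becomes $\lambda_p\lambda_q+t(\sigma-t)$. Here is the crux, and the step I expect to carry the whole argument: because $\lambda_6$ is the maximum we have $\lambda_p,\lambda_q\le m$, hence $t\le 2m$. This cap is indispensable — dropping it and optimising $\lambda_p\lambda_q+t(\sigma-t)$ freely yields $\sigma^2/3$, which can reach $25/12>9/5$, so the lemma genuinely depends on the maximality of $\lambda_6$ rather than on the decomposition identities.

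I would then apply $\lambda_p\lambda_q\le t^2/4$ (with equality at $\lambda_p=\lambda_q=t/2$, admissible precisely because $t/2\le m$), reducing the task to maximising $\varphi(t)=t\sigma-\tfrac34 t^2$ over $t\in[0,2m]$. This downward parabola has its vertex at $t=\tfrac23\sigma$, and I would split according to whether the vertex lies in $[0,2m]$, i.e.\ whether $m\ge\tfrac34$ or $m<\tfrac34$. If $m\ge\tfrac34$ the maximum is $\varphi(\tfrac23\sigma)=\sigma^2/3=(3-m)^2/3\le(9/4)^2/3=27/16<9/5$. If $m<\tfrac34$ then $\varphi$ is increasing on $[0,2m]$ and the maximum is $\varphi(2m)=6m-5m^2$, which is concave in $m$ with maximal value $9/5$ attained at $m=\tfrac35$. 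Either way the bound is at most $9/5$, and the configuration $\lambda_p=\lambda_q=\lambda_6=\tfrac35$, $\lambda_k=\lambda_l=\lambda_n=\tfrac25$ shows sharpness, while $9/5<2$ completes the statement.
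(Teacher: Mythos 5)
Your proof is correct and follows essentially the same route as the paper's: both reduce the left-hand side to $\lambda_p\lambda_q+(\lambda_p+\lambda_q)(\lambda_k+\lambda_l+\lambda_n)$, bound $\lambda_p\lambda_q$ by AM--GM, and exploit $\lambda_p,\lambda_q\le\lambda_6$ to cap $\lambda_p+\lambda_q$, the paper merely packaging this cap via the substitution $b=\tfrac{1}{3}(\lambda_p+\lambda_q+\lambda_6)$ so that the same extremal polynomial $6b-5b^2$ and the same maximizer appear without your two-case vertex analysis. Your observation that the bound genuinely fails (reaching $25/12$) without the maximality of $\lambda_6$ is a worthwhile sanity check not made explicit in the paper.
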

    
    \begin{proof}
        We can assume that $\lambda_{1}\leq \lambda_{2}\leq \ldots \leq \lambda_{5}$ and that $k=1, l=2, n=3$. Our goal is to find the upper bound for 
        $$          (\lambda_{1}+\lambda_{2}+\lambda_{3})(\lambda_{4}+\lambda_{5}) + \lambda_{4}\lambda_{5}.
        $$
        Let $a := \frac{1}{3}(\lambda_{1} + \lambda_{2} + \lambda_{3})$ and $b = \frac{1}{3}(\lambda_{4} + \lambda_{5} + \lambda_{6})$. Then we have $a + b = 1 ,$ $a\leq b$ and 
        $$
            (\lambda_{1}+\lambda_{2}+\lambda_{3})(\lambda_{4}+\lambda_{5}) + \lambda_{4}\lambda_{5} \leq 6ab + b^2 = 6b - 5b^2 \leq \frac{9}{5}.
        $$

    \end{proof}
    As a consequence, we obtain the following lemma. 
     \begin{lemma}
    \label{lemma3.5}
         Let $\lambda_{6} = \max_{i}\lambda_{i}$ and $\lambda_{1}=0$. $k\neq l \in \{2,3,4,5\}$ Then we have 
         $$
            \sum_{1\leq i<j\leq 5}\lambda_{i}\lambda_{j}  - \lambda_{k}\lambda_{l} \leq \frac{9}{5} 
 < 2.
         $$
    \end{lemma}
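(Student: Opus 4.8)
The plan is to derive this immediately from Lemma~\ref{lemma3.4}, which is why the statement is advertised as a consequence. Lemma~\ref{lemma3.4} lets us subtract, from $\sum_{1\le i<j\le 5}\lambda_i\lambda_j$, all three pairwise products among \emph{any} three distinct indices chosen from $\{1,2,3,4,5\}$, and still bound the result by $\tfrac{9}{5}$. The trick is to choose those three indices to be $\{1,k,l\}$: since $k\neq l$ are both taken from $\{2,3,4,5\}$ and $1\notin\{2,3,4,5\}$, the three indices $1,k,l$ are genuinely distinct, so Lemma~\ref{lemma3.4} applies verbatim.

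Carrying out the key step, Lemma~\ref{lemma3.4} applied to the triple $\{1,k,l\}$ yields
$$
\sum_{1\le i<j\le 5}\lambda_i\lambda_j - \lambda_k\lambda_l - \lambda_l\lambda_1 - \lambda_k\lambda_1 \le \frac{9}{5}.
$$
Now I would invoke the hypothesis $\lambda_1=0$, which forces $\lambda_l\lambda_1=\lambda_k\lambda_1=0$. The two extra subtracted terms therefore vanish identically, and the displayed inequality collapses to
$$
\sum_{1\le i<j\le 5}\lambda_i\lambda_j - \lambda_k\lambda_l \le \frac{9}{5} < 2,
$$
which is exactly the claim.

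There is essentially no obstacle here: the entire content is the observation that removing one product $\lambda_k\lambda_l$ in the presence of a vanishing $\lambda_1$ is a special case of removing a three-index "triangle" of products, with index $1$ supplying the two free vanishing edges. The only point requiring a moment of care is verifying that $1,k,l$ are pairwise distinct so that Lemma~\ref{lemma3.4} is legitimately applicable, and the condition $k\neq l\in\{2,3,4,5\}$ guarantees precisely this. No further estimation or case analysis is needed.
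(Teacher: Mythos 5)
Your proof is correct and is exactly the paper's argument: apply Lemma~\ref{lemma3.4} to the triple $\{1,k,l\}$ and note that the two extra terms $\lambda_k\lambda_1$ and $\lambda_l\lambda_1$ vanish since $\lambda_1=0$. No differences to report.
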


    \begin{proof}
          $$
            \sum_{1\leq i<j\leq 5}\lambda_{i}\lambda_{j}  - \lambda_{k}\lambda_{l} = \sum_{1\leq i<j\leq 5}\lambda_{i}\lambda_{j}  - \lambda_{k}\lambda_{l} - \lambda_{k}\lambda_{1} - \lambda_{l} \lambda_{1}  \leq \frac{9}{5} 
         $$
    \end{proof}

    \begin{lemma}
    \label{lemma3.6}
         Let $\lambda_{6} = \max_{i}\lambda_{i}$. Then we have
         
        $$
           \frac{3}{5}(\lambda_{1}\lambda_{5} + \lambda_{1}\lambda_{4} +\lambda_{2}\lambda_{3} + \lambda_{2}\lambda_{5} + \lambda_{3}\lambda_{4} ) + \lambda_{1}\lambda_{2} + \lambda_{1}\lambda_{3} + \lambda_{2}\lambda_{4} +\lambda_{3}\lambda_{5} + \lambda_{4}\lambda_{5} \leq 2
        $$
    \end{lemma}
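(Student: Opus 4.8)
The plan is to uncover the combinatorial structure hidden in the two weighted groups and then reduce the inequality to a single spectral estimate for the $5$-cycle. Write $p = \lambda_1+\lambda_2+\lambda_3+\lambda_4+\lambda_5$. Since $\sum_{i=1}^6\lambda_i = 3$ and $\lambda_6=\max_i\lambda_i$, we have $5\lambda_6\ge p$ and $\lambda_6 = 3-p$, hence $p\le \tfrac52$; set $\delta := \tfrac52 - p\ge 0$. The first key observation is that the ten pairs split into two \emph{complementary} Hamiltonian cycles of $K_5$: the unit-weight group $B=\{12,24,45,35,13\}$ is the $5$-cycle $1-2-4-5-3-1$, while the weight-$\tfrac35$ group $A=\{14,34,23,25,15\}$ is exactly its complementary $5$-cycle $1-4-3-2-5-1$. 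Thus the left-hand side is $F:=\tfrac35 Q_A + Q_B$, where $Q_A,Q_B$ are the edge-product sums $\sum_{(i,j)\in A}\lambda_i\lambda_j$ and $\sum_{(i,j)\in B}\lambda_i\lambda_j$. The candidate extremal configuration is the fully symmetric point $\lambda_1=\cdots=\lambda_6=\tfrac12$, where one checks directly that $F=2$.

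Guided by this, I would translate coordinates to the center by putting $\lambda_i=\tfrac12+\mu_i$, so that $\sum_{i=1}^5\mu_i=-\delta$. Since every vertex has degree $2$ in each cycle, a direct expansion gives $Q_A=\tfrac54-\delta+q_A$ and $Q_B=\tfrac54-\delta+q_B$ with $q_A=\sum_{(i,j)\in A}\mu_i\mu_j$ and $q_B=\sum_{(i,j)\in B}\mu_i\mu_j$, so that $2-F=\tfrac85\delta-\tfrac35 q_A-q_B$. It therefore suffices to prove the clean quadratic bound $\tfrac35 q_A+q_B\le \tfrac{8}{25}\delta^2$, after which $2-F\ge \tfrac85\delta-\tfrac{8}{25}\delta^2=\tfrac{8}{25}\delta(5-\delta)\ge 0$, using $0\le\delta\le\tfrac52$.

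To prove $\tfrac35 q_A+q_B\le\tfrac{8}{25}\delta^2$ I would decompose $\mu=\bar\mu\mathbf{1}+\nu$ with $\bar\mu=-\delta/5$ and $\nu\perp\mathbf{1}$. Writing $q_A=\tfrac12\mu^{T}A_A\mu$ and $q_B=\tfrac12\mu^{T}A_B\mu$ for the two cycle-adjacency matrices and using $A_A\mathbf{1}=A_B\mathbf{1}=2\mathbf{1}$, the all-ones component contributes precisely $\tfrac{8}{25}\delta^2$, so the claim reduces to $\tfrac{3}{10}\nu^{T}A_A\nu+\tfrac12\nu^{T}A_B\nu\le 0$ on $\mathbf{1}^{\perp}$. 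Here I would exploit the complementarity $A_A+A_B=J-I$: since $J\nu=0$ we get $\nu^{T}A_A\nu=-|\nu|^2-\nu^{T}A_B\nu$, and the inequality collapses to the single estimate $\nu^{T}A_B\nu\le\tfrac32|\nu|^2$.

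This last estimate is the heart of the argument and the step I expect to be the real obstacle, because the naive bound from the top eigenvalue $2$ of the cycle is too weak (it only gives $\nu^{T}A_B\nu\le 2|\nu|^2$). One must use the restriction to $\mathbf{1}^{\perp}$: the eigenvalues of the $5$-cycle are $2\cos(2\pi k/5)$, the eigenvalue $2$ belongs to $\mathbf{1}$, and on $\mathbf{1}^{\perp}$ the largest eigenvalue is $2\cos(2\pi/5)=\tfrac{\sqrt5-1}{2}<\tfrac32$, which yields $\nu^{T}A_B\nu\le\tfrac32|\nu|^2$ with room to spare. I would either cite the standard spectrum of $C_5$ or, to keep the proof self-contained, record the relevant eigenvector computation (equivalently, an explicit sum-of-squares identity certifying $\tfrac32|\nu|^2-\nu^{T}A_B\nu\ge 0$ whenever $\sum_i\nu_i=0$). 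Combining the three displayed inequalities gives $F\le 2$, with equality forced only at the center $\delta=0,\ \nu=0$, i.e.\ $\lambda_1=\cdots=\lambda_6=\tfrac12$.
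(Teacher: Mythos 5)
Your proof is correct, but it is a genuinely different argument from the one in the paper. The paper's proof is a two-line averaging trick: it invokes Lemma~\ref{lemma-2} for the five choices $(k,l,m,n)=(1,4,2,3),(1,4,2,5),(1,5,2,3),(1,5,3,4),(2,5,3,4)$ --- these are exactly the five pairs of disjoint edges inside the weight-$\tfrac35$ cycle, so each of those five edges is removed exactly twice --- and sums the resulting inequalities, giving $5\sum_{i<j}\lambda_i\lambda_j-2\sum_{A}\lambda_i\lambda_j\le 10$ and hence the claim after dividing by $5$. You instead prove the bound from scratch: you correctly identify the two groups as complementary Hamiltonian cycles of $K_5$, recenter at $\lambda_i=\tfrac12$, split off the $\mathbf{1}$-component (using that both cycles are $2$-regular), and reduce everything via $A_A+A_B=J-I$ to the estimate $\nu^{T}A_B\nu\le\tfrac32|\nu|^2$ on $\mathbf{1}^{\perp}$, which holds comfortably since the second-largest eigenvalue of $C_5$ is $2\cos(2\pi/5)=\tfrac{\sqrt5-1}{2}$. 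All the algebra checks out ($2-F=\tfrac85\delta-\tfrac35 q_A-q_B$, the $\mathbf{1}$-component contributing exactly $\tfrac{8}{25}\delta^2$, and $\delta\in[0,\tfrac52]$ so $\tfrac{8}{25}\delta(5-\delta)\ge0$); the relabelled cycle $1\!-\!2\!-\!4\!-\!5\!-\!3\!-\!1$ is permutation-conjugate to the standard $C_5$, so the spectral facts transfer, and the promised eigenvector verification is routine. What your route buys is independence from Lemma~\ref{lemma-2}, a sharper structural explanation of why the constant $\tfrac35$ is the right weight for the complementary cycle, and the identification of the unique equality case $\lambda_1=\dots=\lambda_6=\tfrac12$; what it costs is length and the need to import (or verify) the spectrum of $C_5$, whereas the paper gets the lemma essentially for free once Lemma~\ref{lemma-2} is available.
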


    \begin{proof}
        Applying Lemma~\ref{lemma-2} for $$(k,l,m,n) = (1,4,2,3),
        (1,4,2,5),(1,5,2,3), (1,5,3,4), (2,5,3,4)$$ gives us the following:

        $$
            \sum_{1\leq i<j\leq 5}\lambda_{i}\lambda_{j} - \lambda_{1}\lambda_{4} - \lambda_{2}\lambda_{3}\leq 2;
        $$
        $$
            \sum_{1\leq i<j\leq 5}\lambda_{i}\lambda_{j} - \lambda_{1}\lambda_{4} - \lambda_{2}\lambda_{5}\leq 2;
        $$
        $$
            \sum_{1\leq i<j\leq 5}\lambda_{i}\lambda_{j} - \lambda_{1}\lambda_{5} - \lambda_{2}\lambda_{3}\leq 2;
        $$
        $$
            \sum_{1\leq i<j\leq 5}\lambda_{i}\lambda_{j} - \lambda_{1}\lambda_{5} - \lambda_{3}\lambda_{4}\leq 2;
        $$
        $$
            \sum_{1\leq i<j\leq 5}\lambda_{i}\lambda_{j} - \lambda_{2}\lambda_{5} - \lambda_{3}\lambda_{4}\leq 2.
        $$

        The desired estimate is obtained by summation.
        
    \end{proof}

\end{section}

\begin{section}{The case  \texorpdfstring{$\lambda_{1} = 0$}{Lg}}
\label{s5}

\begin{proposition}
\label{lambda-zero-case}
    Let $  \lambda_{1}=0$ and $\lambda_{6} = \max_{i}\lambda_{i}$. Then for an admissible set $\{a_{ij}\}_{1\leq i<j\leq 5}$ we have
    $$
        \sum_{1\leq i < j\leq 5}\lambda_{i}\lambda_{j} a_{ij}^{2} \leq \frac{9}{5}<2.
    $$
\end{proposition}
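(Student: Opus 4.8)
The plan is to exploit the hypothesis $\lambda_1 = 0$ to discard every term of the sum that involves the index $1$. Since $\lambda_1\lambda_j a_{1j}^2 = 0$ for all $j$, the quantity to be bounded reduces to
$$
 \sum_{2\le i<j\le 5}\lambda_i\lambda_j a_{ij}^2,
$$
a sum over the six pairs drawn from $\{2,3,4,5\}$. The only one of the five determinant-like equations that constrains these six numbers is the last one, $a_{24}a_{35}-a_{25}a_{34}=a_{23}a_{45}$ (the remaining four all contain some $a_{1j}$, which are free to help satisfy them). Since I only want an upper bound, I would forget about those four equations and maximize over the a priori larger feasible set cut out by this single relation together with $|a_{ij}|\le 1$.

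So the core is the following optimization: with nonnegative weights $c_{ij}=\lambda_i\lambda_j$, maximize the convex function $f=\sum_{2\le i<j\le 5}c_{ij}a_{ij}^2$ over $[-1,1]^6\cap\{a_{24}a_{35}-a_{25}a_{34}=a_{23}a_{45}\}$. I would show that the maximum is attained at a point where five of the six variables equal $\pm1$ and the sixth equals $0$. To see this, take a maximizer and suppose at least two coordinates lie strictly inside $(-1,1)$. Freezing the other four, the relation becomes an equation in the two free variables which is either affine or of the form $uv=\text{const}$; restricting $f$ to this curve gives, respectively, a convex function of one variable, or a function of the shape $\alpha u^2+\beta/u^2$, and in both cases there is no interior maximum. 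Hence one of the two free variables can be driven to $\pm1$ without decreasing $f$. Iterating, I reach a maximizer with at least five coordinates in $\{\pm1\}$.

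At such a point the remaining coordinate is determined by the relation, and is a ratio whose numerator is a difference of two products of $\pm1$'s (hence lies in $\{-2,0,2\}$) and whose denominator is $\pm1$; the feasibility constraint $|a_{ij}|\le1$ then forces it to be $0$ (and in particular all six cannot be $\pm1$ simultaneously). Writing $(k,l)$ for the vanishing pair, the maximal value is therefore
$$
 f \le \sum_{2\le i<j\le 5}\lambda_i\lambda_j - \lambda_k\lambda_l,
$$
and Lemma~\ref{lemma3.5}, applied with precisely this $k\neq l\in\{2,3,4,5\}$, bounds the right-hand side by $\tfrac{9}{5}$, completing the argument.

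The main obstacle I anticipate is making the reduction step fully rigorous: $f$ is convex but the constraint surface is not convex, so one cannot simply invoke ``a convex function is maximized at a vertex.'' The care is in the two-variable sub-problem, in particular the hyperbola case $uv=c$, where one must check that $\alpha u^2+\beta/u^2$ indeed attains its maximum on the boundary of the box-admissible range rather than at its interior critical point, which is a minimum. Once this local push is justified and seen to strictly increase the number of coordinates pinned at $\pm1$, the termination of the iteration and the final parity argument are routine.
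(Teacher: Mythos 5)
Your argument is correct, but it reaches the paper's conclusion by a genuinely different route. Both proofs rest on the same two pillars: only the single relation $a_{24}a_{35}-a_{25}a_{34}=a_{23}a_{45}$ among the indices $\{2,3,4,5\}$ is retained (the other four relations and all terms with index $1$ are discarded), and the final numerical bound is Lemma~\ref{lemma3.5} applied after one product $\lambda_{k}\lambda_{l}$ has been dropped from $\sum_{1\le i<j\le 5}\lambda_{i}\lambda_{j}$. Where you differ is in how that product gets dropped. The paper orders $\lambda_{2}\le\lambda_{3}\le\lambda_{4}\le\lambda_{5}$, pairs the six terms, and uses the elementary inequality $ca^{2}+Cb^{2}\le c(ab)^{2}+C$ (valid for $0\le c\le C$ and $|a|,|b|\le 1$) to reduce everything to the three products $u=a_{24}a_{35}$, $v=a_{23}a_{45}$, $w=a_{25}a_{34}$ with $u-v=w$, after which a short sign analysis kills one of the three coefficients. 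You instead run an extreme-point analysis directly on $[-1,1]^{6}$ cut by the bilinear relation, concluding that some maximizer has five coordinates equal to $\pm1$ and the sixth forced to $0$ by the parity obstruction (a sum of three terms each equal to $\pm1$ cannot vanish). Your method is essentially the push-to-the-boundary technique the paper itself deploys later, in Propositions~\ref{max-form} and~\ref{zero-case}, imported into this case; it is more systematic, identifies the exact shape of the extremizer, and works for an arbitrary vanishing pair $(k,l)$ since Lemma~\ref{lemma3.5} is indifferent to the choice. The price is the iteration/termination bookkeeping and the degenerate subcases (a frozen partner equal to $0$, the hyperbola $uv=c$ with $c=0$, the constant restricted function) which you correctly flag and which do need to be written out; the paper's pairing argument is shorter and entirely explicit. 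Both routes land on $\sum_{1\le i<j\le5}\lambda_{i}\lambda_{j}-\lambda_{k}\lambda_{l}\le\frac{9}{5}$.
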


\begin{proof}
    We can assume that $\lambda_{2}\leq \lambda_{3}\leq \lambda_{4} \leq \lambda_{5}.$  Then we can evaluate pairs of terms as follows:
    
    $$
        \lambda_{2}\lambda_{4}a_{24}^{2} + \lambda_{3}\lambda_{5}a_{35}^{2} \leq \lambda_{2}\lambda_{4}(a_{24}a_{35})^{2} + \lambda_{3}\lambda_{5};
    $$

       $$
        \lambda_{2}\lambda_{3}a_{23}^{2} + \lambda_{4}\lambda_{5}a_{34}^{2} \leq \lambda_{2}\lambda_{3}(a_{23}a_{45})^{2} + \lambda_{4}\lambda_{5};
    $$

    $$
        \lambda_{2}\lambda_{5}a_{25}^{2} + \lambda_{3}\lambda_{4}a_{34}^{2} \leq \min(\lambda_{2}\lambda_{5},\lambda_{3}\lambda_{4} )(a_{25}a_{34})^{2} + \max(\lambda_{2}\lambda_{5},\lambda_{3}\lambda_{4} ).
    $$
    Let $u=a_{24}a_{35}, v = a_{23}a_{45}; w = a_{25}a_{34}.$ $u-v=w.$  Consider the following cases:
    
    \textbf{Case 1: $uv\leq 0$.} 

    $$
        \lambda_{2}\lambda_{4}u^{2} + \lambda_{2}\lambda_{3}v^{2} \leq   \lambda_{2}\lambda_{4}(u^{2} + v^{2}) \leq \lambda_{2}\lambda_{4}(u - v)^{2} \leq \lambda_{2}\lambda_{4}.
    $$

    $$
     \min(\lambda_{2}\lambda_{5},\lambda_{3}\lambda_{4} )(a_{25}a_{34})^{2} + \max(\lambda_{2}\lambda_{5},\lambda_{3}\lambda_{4} ) \leq \lambda_{2}\lambda_{5} + \lambda_{3}\lambda_{4}
    $$

    Therefore we get
    
    $$
        \sum_{1\leq i < j\leq 5}\lambda_{i}\lambda_{j} a_{ij}^{2} \leq \lambda_{3}\lambda_{5} + \lambda_{4}\lambda_{5} + \lambda_{2}\lambda_{4} + \lambda_{2}\lambda_{5} + \lambda_{3}\lambda_{4}\leq \frac{9}{5},
    $$
    where the last inequality was obtained using Lemma~\ref{lemma3.5}. 
    
    \textbf{Case 2: $uv > 0$.}
    Without loss of generality, we can assume that 
    $u,v>0$.
    
    \textbf{Case 2.1 $w = u-v \leq 0$.}
    
    Then $ uw\leq 0$ and  

    $$
        \min(\lambda_{2}\lambda_{5},\lambda_{3}\lambda_{4} )w^{2} + \lambda_{2}\lambda_{4}u^{2}
        $$
        
        $$
        \leq \min(\lambda_{2}\lambda_{5},\lambda_{3}\lambda_{4} )(w^{2} + u^{2}) \leq \min(\lambda_{2}\lambda_{5},\lambda_{3}\lambda_{4} )(w - u)^{2} \leq \min(\lambda_{2}\lambda_{5},\lambda_{3}\lambda_{4} );
    $$

     Therefore we get
    
    $$
        \sum_{1\leq i < j\leq 5}\lambda_{i}\lambda_{j} a_{ij}^{2} \leq \lambda_{3}\lambda_{5} + \lambda_{4}\lambda_{5} + \lambda_{2}\lambda_{3} + \lambda_{2}\lambda_{5} + \lambda_{3}\lambda_{4}\leq \frac{9}{5},
    $$
    where the last inequality was obtained using Lemma~\ref{lemma3.5}.

    \textbf{Case  2.2 $w = u-v \geq 0$.} then $vw\geq 0.$
    $$
        \min(\lambda_{2}\lambda_{5},\lambda_{3}\lambda_{4} )w^{2} + \lambda_{2}\lambda_{3}v^{2}     
        $$
        $$\leq \min(\lambda_{2}\lambda_{5},\lambda_{3}\lambda_{4} )(w^{2} + v^{2}) \leq \min(\lambda_{2}\lambda_{5},\lambda_{3}\lambda_{4} )(w + v)^{2} \leq \min(\lambda_{2}\lambda_{5},\lambda_{3}\lambda_{4} )
    $$

     Therefore we get
    
    $$
        \sum_{1\leq i < j\leq 5}\lambda_{i}\lambda_{j} a_{ij}^{2} \leq \lambda_{3}\lambda_{5} + \lambda_{4}\lambda_{5} + \lambda_{2}\lambda_{4} + \lambda_{2}\lambda_{5} + \lambda_{3}\lambda_{4}\leq \frac{9}{5},
    $$
    where the last inequality was obtained using Lemma~\ref{lemma3.5}.

\end{proof}
    
\end{section}

\end{section}

\begin{section}{Maximal sets}
\label{s6}

    Our main goal in this section is to find the special properties of maximal sets.
    
    \begin{definition}
    \label{def-bad}
        We call the admissible set $\{a_{ij}\}_{1\leq i<j\leq 5}$  \textbf{ peculiar} if after some permutation of indices it holds that 
        $$
            |a_{12}|=|a_{13}|=|a_{24}|=|a_{35}|=|a_{45}|=1;
        $$ 
        
        $$
            |a_{15}| + |a_{14}|\geq 1;
        $$ 
        
        $$
            |a_{23}| = \frac{|a_{14}| + |a_{15}| - 1}{|a_{14}a_{15}|};
        $$
        $$
            |a_{25}| = \frac{1-|a_{15}|}{|a_{14}|};
        $$
        $$
            |a_{34}| = \frac{1-|a_{14}|}{|a_{15}|}.
        $$
    \end{definition}
    
    \begin{proposition}
        \label{max-form}
        Assume that $\lambda_{i}> 0$ for all $i\in\{1,2,3,4,5\}$ and that the set $A=\{a_{ij}\}_{1\leq i<j\leq 5}$ is maximal. Then $A$ is  peculiar or $A$ contains zeros. 
    \end{proposition}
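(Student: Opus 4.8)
The plan is to pass to a geometric parametrization that makes the admissibility constraints disappear and turns the objective into something I can optimize one vector at a time. The five equations of Lemma~\ref{det-like} are exactly the three-term Plücker relations of $Gr(2,5)$, one for each $4$-element subset of $\{1,2,3,4,5\}$. Hence every admissible set is decomposable: there exist planar vectors $p_1,\dots,p_5\in\mathbb{R}^2$ with $a_{ij}=\det(p_i,p_j)=p_i\wedge p_j$, and conversely any such family gives an admissible set as soon as $\max_{i<j}|p_i\wedge p_j|\le 1$. (This is just the statement that $(a_{ij})$ is the array of $2\times 2$ minors of the $2\times 5$ matrix whose columns are the $p_i$.) The point of this reparametrization is that a perturbation of a single $p_i$ automatically preserves all relations of Lemma~\ref{det-like}, so I may move one vector at a time without ever leaving the admissible variety.

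Fix a maximal set realized by $p_1,\dots,p_5$ and freeze every $p_j$ with $j\ne i$. Up to an additive constant the objective becomes $q_i(p_i)=\lambda_i\sum_{j\ne i}\lambda_j\,(p_i\wedge p_j)^2$, a sum of squares of linear forms in $p_i$, hence a positive semidefinite (convex) quadratic. The feasible region for $p_i$ is the convex polygon $P_i=\{p:\ |p\wedge p_j|\le 1\ \text{for all } j\ne i\}$, an intersection of four strips; since $0\in\operatorname{int}P_i$ and $q_i(0)=0$, the maximum is positive and lies on $\partial P_i$. A strictly convex function on a bounded polygon attains its maximum only at a vertex, so $p_i$ must sit at a vertex of $P_i$, where at least two constraints are tight. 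Thus each index $i$ belongs to at least two pairs with $|a_{ij}|=1$.

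Before reading off the combinatorics I would dispose of the degenerate cases, which feed the ``contains zeros'' alternative. Both failures of the previous paragraph — $P_i$ unbounded and $q_i$ merely semidefinite — occur exactly when the vectors $\{p_j:\ j\ne i\}$ are all parallel; in that event $a_{jk}=p_j\wedge p_k=0$ for all $j,k\ne i$, so the set contains zeros and we are done. Otherwise $q_i$ is positive definite and $P_i$ is bounded with nonempty interior, so the maximum is at a genuine vertex. Encoding the tight constraints as a graph $G$ on $\{1,\dots,5\}$ with an edge for each pair $|a_{ij}|=1$, the conclusion so far is that, absent zeros, $G$ has minimum degree at least two.

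The main obstacle is the final, combinatorial step: promoting ``minimum degree $\ge 2$'' to the exact peculiar pattern of Definition~\ref{def-bad}. Minimum degree two alone does not force a Hamiltonian $5$-cycle (witness two triangles sharing a vertex), so I would sharpen the vertex optimality using the outward-normal condition $M_ip_i\in\operatorname{cone}\{\pm p_j^\perp:\ |a_{ij}|=1\}$ at each vertex, together with the relations of Lemma~\ref{det-like}, to exclude non-simple vertices and collapse every configuration having an index of degree $\ge 3$ onto one with a vanishing coordinate. After this reduction $G$ is $2$-regular, and the only $2$-regular simple graph on five vertices is the cycle $1\!-\!2\!-\!4\!-\!5\!-\!3\!-\!1$; relabelling places the five unit entries at $a_{12},a_{24},a_{45},a_{35},a_{13}$ as required. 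Finally I would solve the relations of Lemma~\ref{det-like} for the remaining entries: the subsets $\{1,2,4,5\}$, $\{1,3,4,5\}$ and $\{1,2,3,5\}$ give, after a consistent choice of signs, exactly $|a_{25}|=\frac{1-|a_{15}|}{|a_{14}|}$, $|a_{34}|=\frac{1-|a_{14}|}{|a_{15}|}$ and $|a_{23}|=\frac{|a_{14}|+|a_{15}|-1}{|a_{14}a_{15}|}$, while $|a_{ij}|\le 1$ and nonnegativity of $|a_{23}|$ force $|a_{14}|+|a_{15}|\ge 1$; moreover hitting $|a_{14}|=1$ or $|a_{15}|=1$ drives $a_{34}$ or $a_{25}$ to zero, i.e. returns us to the ``contains zeros'' case. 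I expect the sign bookkeeping in this last computation, and the rigorous elimination of higher-degree vertices, to be where essentially all of the work lies.
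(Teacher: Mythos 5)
Your reduction to the degree condition is correct and is a genuinely cleaner route than the paper's. The five relations of Lemma~\ref{det-like} are indeed the Pl\"ucker relations of $Gr(2,5)$, so an admissible set is $a_{ij}=\det(p_i,p_j)$ for some $p_1,\dots,p_5\in\R^2$ (this is also how the $a_{ij}$ arise in the first place, with $p_i$ the first two coordinates of $u_i$ after normalizing $u_6=(0,0,1)^T$). The paper instead encodes decomposability through $w=\frac{1}{a_{12}}[u;v]$ and has to perturb along the family $(tu,\tfrac1t v)$ and then along a segment of $v$'s with $[u;v]$ fixed; your one-vector-at-a-time perturbation of $p_i$ inside the polygon $P_i$ reaches the same conclusion --- every index is incident to at least two tight constraints $|a_{ij}|=1$, unless the set already contains zeros --- in one stroke, and your handling of the degenerate cases (all $p_j$, $j\ne i$, parallel) correctly routes them to the ``contains zeros'' alternative.

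The gap is the combinatorial endgame, and it is not a small one: it is where essentially all of the paper's proof of this proposition lives. You correctly note that minimum degree $\ge 2$ does not force the $5$-cycle, but your plan for excluding vertices of degree $\ge 3$ --- sharpening vertex optimality via the outward-normal condition so as to ``collapse every configuration having an index of degree $\ge 3$ onto one with a vanishing coordinate'' --- is a proposal, not an argument; you never verify that the normal-cone condition (or anything else) actually eliminates those configurations. The paper does this purely algebraically, by a case analysis on which entries equal $\pm1$ (Cases 23.42, 23.42.15, 23.42.25, 23.42.35, 23.45, 24.34, 24.35.43), each time substituting the unit entries into the relations of Lemma~\ref{det-like} to produce an identity such as $|a_{14}|+|a_{34}|=1$ or $|a_{14}a_{15}|+|a_{34}a_{35}|=0$ that forces a zero. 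Until you carry out an equivalent elimination --- and also settle the sign bookkeeping you defer at the end (e.g.\ from $|{\pm}1-a_{14}a_{23}|=|a_{34}|\le 1$ the ``wrong'' sign itself forces $a_{14}a_{23}=0$, which is what legitimizes the absolute-value system of Definition~\ref{def-bad}) --- the proposition is not proved.
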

    
    \begin{proof}
        Consider the maximal set $\{a_{ij}\}_{1\leq i < j\leq 5}$ and assume that it does not contain zero. Then we fix $a_{12}$ and define $u=(a_{13}, a_{14}, a_{15})^{T}, v=(a_{23},a_{24},a_{25})^{T}, w = (-a_{45},a_{35},-a_{34})^{T}$. Note that the following relation holds:
    
        $$
            w = \frac{1}{a_{12}}[u; v].
        $$
        Moreover, if we have a set $\{a_{ij}\}_{1\leq i < j\leq 5}$ without zeros, with this relation and all its elements lie in $[-1;1]$, then the set is admissible. 
    
         We define positive matriсes $A,B,C \in \mathbb{R}^{3\times 3}$ as follows:  
        $$
            A = diag(\lambda_{1}\lambda_{3}, \lambda_{1}\lambda_{4},\lambda_{1}\lambda_{5});
        $$ 
        
        $$
            B = diag(\lambda_{2}\lambda_{3}, \lambda_{2}\lambda_{4},\lambda_{2}\lambda_{5});
        $$
        
        $$ 
            C = diag(\lambda_{4}\lambda_{5}, \lambda_{3}\lambda_{5},\lambda_{3}\lambda_{4}).
        $$ 
        Let us rewrite $\sum_{1\leq i < j\leq 5}\lambda_{i}\lambda_{j} a_{ij}^{2}$ in terms of new variables $u,v,[u;v], a_{12}$.
        $$
            \sum_{1\leq i < j\leq 5}\lambda_{i}\lambda_{j} a_{ij}^{2} = \langle Au, u\rangle + \langle Bv, v\rangle + \frac{1}{a_{12}^2}\langle C ([u;v]), [u;v]\rangle + \lambda_{1}\lambda_{2}a_{12}^2.
        $$
    
        Since the set $\{a_{ij}\}$ is admissible, we know that $u,v,w \in [-1;1]^{3}.$ Our goal now is to show that $u,v \in \partial [-1;1]^{3}.$ 
    
        Consider $tu, \frac{1}{t}v$. Then our expression becomes
    
        $$
            \sum_{1\leq i < j\leq 5}\lambda_{i}\lambda_{j} a_{ij}^{2}(t) = \alpha t^2 + \beta \frac{1}{t^2} + \gamma,
        $$
        for some $\alpha, \beta > 0.$ Then since $\alpha t^2 + \beta \frac{1}{t^2} + \gamma$ is strictly convex, its maximum is reached only  at the boundary. Therefore $u \in \partial [-1;1]^{3}$ or $v \in \partial [-1;1]^{3}.$ Let us assume that $u\in\partial [-1;1]^{3}.$ Further, let us fix $u$ and $[u;v]$. Then all $v\in [-1;1]^3$ such that $[u;v]$ is fixed form a segment $\{(1-s)v_{0} + sv_{1}| t\in [0;1]\}$ inside the cube with $v_{0},v_{1}\in \partial[-1;1]^{3}.$ So let $v_{s} = (1-s)v_{0} + sv_{1}. $ Then our expression becomes
    
        $$
            \sum_{1\leq i < j\leq 5}\lambda_{i}\lambda_{j} a_{ij}^{2}(s) = \tilde{\alpha} s^2 + \tilde{\beta}s + \tilde{\gamma},
        $$
        where  $\tilde{\alpha} = \langle B(v_{1}-v_{0}), v_{1}-v_{0}\rangle \geq 0. $ If $\tilde{\alpha} = 0$, then $v_{t}=v_{0}=v_{1}\in\partial[-1;1]^{3}$. If $\tilde{\alpha}>0$,  then, since $\tilde{\alpha} s^2 + \tilde{\beta}s + \tilde{\gamma}$ is strictly convex, its maximum is reached only  at the boundary. Therefore $v \in \partial [-1;1]^{3}$.
    
        We showed that if the set $\{a_{ij}\}_{1\leq i < j \leq 1}$ is maximal and $a_{12}\neq 0$, then $$
            (a_{13}, a_{14}, a_{15})^{T}, (a_{23},a_{24},a_{25})^{T} \in \partial [-1;1]^{3}. 
        $$ 
        Similarly, if $i,k,l,m$ are four distinct indices from $\{1,2,3,4,5\}$ and the set $\{a_{ij}\}_{1\leq i < j \leq 1}$ is maximal, then $(a_{ik}, a_{il},a_{im})^{T} \in \partial [-1;1]^{3}$. Note that if $i>j$, then $a_{ij}:=-a_{ji}$.
    
        The remaining step is to use the relations from Lemma~\ref{det-like} and the condition $$(a_{ik}, a_{il},a_{im})^{T} \in \partial [-1;1]^{3}$$ to understand how does a maximal set look like.
    
        There are at least two ones among $|a_{12}|,|a_{13}|,|a_{14}|,|a_{15}|$. Assume without loss of generality that  $|a_{12}|=|a_{13}|=1.$ Then there is at least one $1$ among $|a_{23}|, |a_{24}|,|a_{25}|$. Since the case where $|a_{24}|=1$ and the case where $|a_{25}|=1$ are the same, there are only two cases: $|a_{23}|=1$ and $|a_{24}|=1.$
    
        \textbf{Case 23. ($|a_{12}|=|a_{13}|=|a_{23}|=1$).} In this case there is at least one $1$ among $|a_{42}|, |a_{43}|,|a_{45}|$. From symmetry, it suffices to consider only two cases: $|a_{42}|=1$ and $|a_{45}|=1.$
    
        \textbf{Case 23.42 ($|a_{12}|=|a_{13}|=|a_{23}|=1$, $|a_{42}|=1$)} The relation  $a_{13}a_{24} - a_{14}a_{23} = a_{12}a_{34}$  gives us  $|a_{14}| + |a_{34}|=1.$ Further, there is at least one $1$ among $|a_{41}|,|a_{43}|, |a_{45}|$. If $|a_{41}|=1$ or $|a_{43}|=1$, then from $|a_{14}| + |a_{34}|=1$ we can conclude that the remaining equals $0$. Therefore the remaining case is $|a_{45}|=1.$ Then there is at least one $1$ among $|a_{15}|$, $|a_{25}|$, $|a_{35}|$.
    
         \textbf{Case 23.42.15($|a_{12}|=|a_{13}|=|a_{23}|=1$, $|a_{42}|=1$, $|a_{45}|=|a_{15}|=1$)} The relations 
         
         $$
            a_{13}a_{24} - a_{14}a_{23} = a_{12}a_{34},
         $$
           $$
            a_{13}a_{25} - a_{15}a_{23} = a_{12}a_{35},
         $$
         $$
            a_{14}a_{35} - a_{15}a_{34} = a_{13}a_{45}
         $$
         give us $|a_{14}| + |a_{34}| = 1;$ $|a_{25}| + |a_{35}| = 1;$ $|a_{14}a_{35}| + |a_{15}a_{34}| = 1$. Assume that $a_{14}\neq 0$, then $|a_{35}|=1$ and therefore $a_{25}=0.$

         \textbf{Case 23.42.25($|a_{12}|=|a_{13}|=|a_{23}|=1$, $|a_{42}|=1$, $|a_{45}|=|a_{25}|=1$)} The relations 
         
         $$
            a_{13}a_{24} - a_{14}a_{23} = a_{12}a_{34},
         $$
           $$
            a_{13}a_{25} - a_{15}a_{23} = a_{12}a_{35},
         $$
         $$
            a_{14}a_{35} - a_{15}a_{34} = a_{13}a_{45}
         $$
         give us $|a_{14}| + |a_{34}| = 1;$ $|a_{15}| + |a_{35}| = 1;$ $|a_{14}a_{35}| + |a_{15}a_{34}| = 1$. Then $|a_{14}a_{15}| + |a_{34}a_{35}|=0$ and therefore there is at least one zero in the set.
         
         \textbf{Case 23.42.35($|a_{12}|=|a_{13}|=|a_{23}|=1$, $|a_{42}|=1$, $|a_{45}|=|a_{35}|=1$)} 
     The relations 
         
         $$
            a_{13}a_{24} - a_{14}a_{23} = a_{12}a_{34},
         $$
           $$
            a_{13}a_{25} - a_{15}a_{23} = a_{12}a_{35},
         $$
       $$
            a_{14}a_{25} - a_{15}a_{24} = a_{12}a_{45};
        $$
         give us $|a_{14}|+|a_{34}|=1; |a_{25}| + |a_{15}|=1; |a_{14}a_{25}| + |a_{15}|=1.$ Assume that $a_{25}\neq 0$, then $|a_{14}|=1$ and therefore $a_{34}=0.$ 
    
        \textbf{Case 23.45 ($|a_{12}|=|a_{13}|=|a_{23}|=1$, $|a_{45}|=1$)} Then there is at least one $1$ among $|a_{14}|,|a_{24}|,|a_{34}|$ and all these cases are similar to the Case $23.42$.

        \textbf{Case 24. ($|a_{12}|=|a_{13}|=|a_{24}|=1$).} Then there is at least one $1$ among $|a_{32}|,|a_{34}|,|a_{35}|$. The case where $|a_{32}|=1$ is done in the Case $23.42$, so there are only two left: $|a_{34}|=1$ and $|a_{35}|=1$.
    
        \textbf{Case 24.34 ($|a_{12}|=|a_{13}|=|a_{24}|=1$, $|a_{34}|=1$).} Then the relation 
        $$
            a_{13}a_{24} - a_{14}a_{23} = a_{12}a_{34}
        $$
        gives us $|a_{14}a_{23}|=0$.
    
        \textbf{Case 24.35 ($|a_{12}|=|a_{13}|=|a_{24}|=1$, $|a_{35}|=1$).}  Then there is at least one $1$ among $|a_{41}|,|a_{43}|,|a_{45}|$. The case where $|a_{12}|=|a_{24}| = |a_{14}| = 1$ is similar to the case where $|a_{12}|=|a_{13}| = |a_{23}|=1$, which is Case $23$. Therefore there are two cases left: $|a_{43}|=1$ and $|a_{45}|=1$.
    
         \textbf{Case 24.35.43 ($|a_{12}|=|a_{13}|=|a_{24}|=1$, $|a_{35}|=1$, $|a_{43}|=1$).} Then the relation 
        $$
            a_{13}a_{24} - a_{14}a_{23} = a_{12}a_{34}
        $$
        gives us $|a_{14}a_{23}|=0$.
    
          \textbf{Case 24.35.45 ($|a_{12}|=|a_{13}|=|a_{24}|=1$, $|a_{35}|=1$, $|a_{45}|=1$).} This is the case of peculiar sets. The relations give us the following system of equations: 
    $$
        1 = |a_{34}| + |a_{14}a_{23}|;
    $$
    
    $$
        1 = |a_{25}| + |a_{15}a_{23}|;
    $$
    
    $$
        1 = |a_{15}| + |a_{14}a_{25}|;
    $$
    
    $$
        1 = |a_{14}| + |a_{15}a_{34}|;
    $$
    
    $$
        1 = |a_{23}| + |a_{25}a_{34}|.
    $$
    
    If we solve it with respect to $|a_{14}|,|a_{15}|$, we get 
    
        $$
            |a_{15}| + |a_{14}|\geq 1;
        $$ 
        
        $$
            |a_{23}| = \frac{|a_{14}| + |a_{15}| - 1}{|a_{14}a_{15}|};
        $$
        $$
            |a_{25}| = \frac{1-|a_{15}|}{|a_{14}|};
        $$
        $$
            |a_{34}| = \frac{1-|a_{14}|}{|a_{15}|}.
        $$
        
    \end{proof}

\end{section}

\begin{section}{The case of peculiar sets}
\label{s7}
    In this case, we need two more technical lemmas. 
    \begin{lemma}
    \label{omega}
        Let 
        $$
            \Omega = \{(x,y) \in [0;1]^{2} \text{ }\vert \text{ } x\geq \frac{1}{2}; y\geq\frac{1}{2};xy\leq \frac{1}{2};2y-xy\leq 1; 2x-xy\leq 1;  \};
        $$
        $$
            g:(x,y)\mapsto \left(\frac{1-x}{1-xy}, 1-xy\right ).
        $$
        Then $g(\Omega)\subset \Omega.$
    \end{lemma}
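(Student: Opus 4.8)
The plan is to prove the inclusion by direct verification. I would fix $(x,y)\in\Omega$ and write $(x',y')=g(x,y)$, so that $y'=1-xy$ and $x'=\frac{1-x}{1-xy}$, and then check the five defining inequalities of $\Omega$ for $(x',y')$, together with $(x',y')\in[0;1]^2$, one at a time, showing that each reduces to a hypothesis already available for $(x,y)$. The argument rests on two elementary observations. First, since $x,y\ge\tfrac12$ one has $xy\le\tfrac12<1$, so the denominator $1-xy$ is strictly positive; together with $0\le x\le 1$ this makes $g$ well defined on $\Omega$ and immediately gives $x'\ge 0$ and $y'=1-xy\in[\tfrac12,1]$. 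Second, and crucially, I would record the identity
$$x'y'=\frac{1-x}{1-xy}\,(1-xy)=1-x,$$
which collapses several of the checks to a single inequality.

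With these identities in hand, each target inequality matches exactly one hypothesis. The bound $x'\le 1$ is equivalent (after clearing the positive denominator) to $xy\le x$, i.e. to $y\le 1$. The bound $y'\ge\tfrac12$ is literally $xy\le\tfrac12$. The bound $x'y'\le\tfrac12$ becomes $1-x\le\tfrac12$, i.e. $x\ge\tfrac12$. The bound $x'\ge\tfrac12$ rearranges to $2-2x\ge 1-xy$, i.e. $2x-xy\le 1$. The inequality $2y'-x'y'\le 1$ simplifies to $1+x-2xy\le 1$, i.e. $x(1-2y)\le 0$, hence $y\ge\tfrac12$ since $x>0$. Finally $2x'-x'y'=x'(1+xy)=\frac{(1-x)(1+xy)}{1-xy}\le 1$ becomes, after clearing the denominator and cancelling, $x(2y-1-xy)\le 0$, i.e. $2y-xy\le 1$. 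Thus the five inequalities defining membership of $(x',y')$ in $\Omega$ correspond, in a permuted fashion, precisely to the five inequalities defining $(x,y)\in\Omega$, while the remaining box constraints $y'\le 1$ and $x'\ge 0$ follow from $x,y\in[0;1]$.

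I do not anticipate a serious obstacle here: once the identity $x'y'=1-x$ is spotted, every verification is a one-line algebraic rearrangement. The only points requiring genuine care are that $1-xy>0$ (so that multiplying inequalities through by the denominator preserves their direction) and that $x>0$ (used when dividing out the factor $x$ in the last two checks, which is legitimate since $x\ge\tfrac12$). The mild subtlety worth flagging is bookkeeping: the map $g$ permutes the roles of the constraints, so I would present the correspondence explicitly, to make transparent that one actually obtains $g(\Omega)\subseteq\Omega$ and not merely the weaker statement $g(\Omega)\subseteq[0;1]^2$.
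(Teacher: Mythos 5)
Your proof is correct and follows essentially the same route as the paper: a direct verification that each of the five defining inequalities for $g(x,y)$ reduces, after clearing the positive denominator $1-xy$, to one of the five hypotheses on $(x,y)$ (with the final check $2x'-x'y'\le 1$ collapsing to $x(2y-xy-1)\le 0$ exactly as in the paper). Your explicit use of the identity $x'y'=1-x$ and your attention to the box constraints are minor presentational improvements, not a different argument.
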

    
    \begin{proof}
        
        We need to check five inequalities.
        $$
            2x-xy\leq 1 \Leftrightarrow 1 - xy \leq 2 -2x \Leftrightarrow \frac{1}{2} \leq \frac{1-x}{1-xy};
        $$
    
        $$
            xy \leq \frac{1}{2} \Leftrightarrow \frac{1}{2}\leq 1 - xy;
        $$
    
        $$
            x\geq \frac{1}{2} \Leftrightarrow 1 - x \leq \frac{1}{2} \Leftrightarrow \frac{1-x}{1-xy}(1-xy)\leq \frac{1}{2};
        $$
    
        $$
            y\geq \frac{1}{2} \Rightarrow x-2xy\leq 0 \Rightarrow 2(1-xy) - (1-x) \leq 1.
        $$
       The remaining one requires additional calculations:
        $$
            2\frac{1-x}{1-xy} - (1-x) \stackrel{?}{\leq} 1 \Leftrightarrow 
        $$
        $$
            2(1-x)- (1-x)(1-xy) \stackrel{?}{\leq} 1 -xy \Leftrightarrow
        $$
        $$
            2-2x- 1+x+xy-x^2y  \stackrel{?}{\leq} 1 -xy \Leftrightarrow
        $$
    
          $$
            -x + 2xy-x^2y  \stackrel{?}{\leq} 0 \Leftrightarrow x(-1 + 2y-xy)  
     \leq 0 
        $$
    \end{proof}
    
    \begin{lemma}
    \label{lemma7.2}
        Assume that  $(x,y)\in \Omega$. Then 
        $$
            \max \left(x^{2},y^{2},(1-xy)^{2}, \left(\frac{1-x}{1-xy}\right)^{2}, \left(\frac{1-y}{1-xy}\right)^{2}\right)\leq \frac{9}{16} < \frac{3}{5}.
        $$
    \end{lemma}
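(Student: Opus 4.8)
The plan is to observe that each of the five quantities inside the maximum is the square of a nonnegative number, so it suffices to bound the five base quantities
$$x,\quad y,\quad 1-xy,\quad \frac{1-x}{1-xy},\quad \frac{1-y}{1-xy}$$
from above by $\frac34$; then each square is at most $\frac{9}{16}<\frac35$. Nonnegativity is immediate: $x,y\ge\frac12$, the denominator satisfies $1-xy\ge\frac12>0$ because $xy\le\frac12$, and the numerators $1-x,1-y\ge0$ since $x,y\le1$. The two defining constraints $2x-xy\le1$ and $2y-xy\le1$ of $\Omega$, rewritten as $x(2-y)\le1$ and $y(2-x)\le1$, together with $xy\le\frac12$, are exactly what I will feed into the estimates.

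First I would dispose of the three easy terms. For $1-xy$ the bound $1-xy\le\frac34$ is just $xy\ge\frac14$, which follows from $x,y\ge\frac12$. For $x$ I would argue by contradiction: if $x>\frac34$, then $x(2-y)\le1$ forces $y>\frac23$, while $xy\le\frac12$ forces $y<\frac23$, a contradiction; hence $x\le\frac34$, and $y\le\frac34$ by the symmetry $x\leftrightarrow y$ of $\Omega$.

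The two fractional terms are the real content, and by the same symmetry it is enough to treat $\frac{1-x}{1-xy}$. Clearing the positive denominator, the inequality $\frac{1-x}{1-xy}\le\frac34$ is equivalent to the polynomial inequality $x(4-3y)\ge1$. Here I would use only the constraint $y(2-x)\le1$, i.e. $y\le\frac1{2-x}$ (note $2-x>0$). Since $4-3y$ is decreasing in $y$ and $x\ge0$,
$$x(4-3y)\ \ge\ x\Bigl(4-\tfrac{3}{2-x}\Bigr)=\frac{x(5-4x)}{2-x},$$
and $\frac{x(5-4x)}{2-x}\ge1$ reduces, after multiplying by $2-x>0$, to $2x^2-3x+1\le0$, that is $(2x-1)(x-1)\le0$, which holds for all $x\in[\frac12,1]$. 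This closes the estimate; the fifth term follows verbatim with $x$ and $y$ exchanged.

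The main (and essentially only) obstacle is handling the two fractional terms, since the other three are one-line consequences of the box and product constraints. I expect no case analysis to be needed once the inequality is converted to $x(4-3y)\ge1$ and the single constraint $y\le\frac1{2-x}$ is used; the whole difficulty collapses to the factorization $(2x-1)(x-1)\le0$. It is worth noting that the bound is sharp: equality $\frac{9}{16}$ is attained at $(x,y)=(\frac12,\frac23)$ (and, by symmetry, at $(\frac23,\frac12)$), which is where the constraint $2y-xy\le1$ is active; this is a useful sanity check that no constant in the argument can be improved.
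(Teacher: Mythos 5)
Your proof is correct, and it diverges from the paper's in how the two fractional terms are handled. The paper gets $x,y\le\frac34$ exactly as you do in substance (it chains $1\le 2x\le 1+xy\le\frac32$ rather than arguing by contradiction, but these are the same computation), and then it disposes of $\tfrac{1-x}{1-xy}$ and $1-xy$ in one stroke by invoking Lemma~\ref{omega}: since $g(x,y)=\left(\tfrac{1-x}{1-xy},\,1-xy\right)$ lands back in $\Omega$, the coordinate bound $\le\frac34$ just proved for points of $\Omega$ applies to the image point as well, and the fifth term follows by the $x\leftrightarrow y$ symmetry. You instead verify the fractional bound directly: clearing the (positive) denominator to get $x(4-3y)\ge1$, substituting the worst case $y=\tfrac1{2-x}$ allowed by the constraint $2y-xy\le1$, and reducing to $(2x-1)(x-1)\le0$ on $[\tfrac12,1]$. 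Your route is self-contained and in fact uses fewer of the defining inequalities of $\Omega$ (only $x\in[\tfrac12,1]$, $xy\le\tfrac12$ for positivity of the denominator, and $y(2-x)\le1$), which makes the lemma slightly more robust as a standalone statement; the paper's route is shorter on the page because the invariance $g(\Omega)\subset\Omega$ has already been paid for in Lemma~\ref{omega}, where it is also needed structurally. Your sharpness check at $(\tfrac12,\tfrac23)$ is accurate and is a nice addition not present in the paper.
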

    
    \begin{proof}
        $1\leq 2x \leq 1 + xy \leq \frac{3}{2}$ and  $1\leq 2y \leq 1 + xy \leq \frac{3}{2}$.   From Lemma~\ref{omega} we have $  \left(\frac{1-x}{1-xy}, 1-xy\right ) \in \Omega, $ so the same holds for them. 
        
    \end{proof}

    \begin{lemma}
    \label{lemma7.3}
    Let $\lambda_{6} = \max_{i}\lambda_{i}$ and for $x,y \in [0;1]^{2}\setminus(1,1)$ we define $f(x,y)$ as follows:
        $$
            f(x,y) := \lambda_{1}\lambda_{5}\left(\frac{y-1}{xy-1}\right)^2 + \lambda_{1}\lambda_{4} \left(\frac{x-1}{xy-1}\right)^2 + \lambda_{2}\lambda_{3}(1-xy)^2 + \lambda_{2}\lambda_{5}y^{2} + \lambda_{3}\lambda_{4}x^{2} .
        $$
        Then the following inequality is satisfied:
        $$
            \lambda_{1}\lambda_{2} + \lambda_{1}\lambda_{3} +  \lambda_{2}\lambda_{4} + \lambda_{3}\lambda_{5} + \lambda_{4}\lambda_{5} + f(x,y) \leq 2. 
        $$
    \end{lemma}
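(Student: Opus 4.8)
The plan is to reduce the inequality to the numerical estimates already established in Lemmas \ref{lemma-2}, \ref{lemma3.4}, \ref{lemma3.5} and \ref{lemma3.6}. First I would fix the notation
$$a=\frac{1-y}{1-xy},\quad b=\frac{1-x}{1-xy},\quad c=1-xy,\quad d=y,\quad e=x,$$
so that $a,b,c,d,e\in[0;1]$ and $f(x,y)=\lambda_1\lambda_5a^2+\lambda_1\lambda_4b^2+\lambda_2\lambda_3c^2+\lambda_2\lambda_5d^2+\lambda_3\lambda_4e^2$. A direct computation gives the five relations
$$a+bd=b+ae=c+de=d+ac=e+bc=1,$$
which are exactly the peculiar-set relations of Case $24.35.45$ and which express that, read in the cyclic order $a,d,c,e,b$, these quantities satisfy the pentagon recurrence $v_i=1-v_{i-1}v_{i+1}$. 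Moreover the five inside coefficients $\{\lambda_1\lambda_5,\lambda_1\lambda_4,\lambda_2\lambda_3,\lambda_2\lambda_5,\lambda_3\lambda_4\}$ and the five outside products $\{\lambda_1\lambda_2,\lambda_1\lambda_3,\lambda_2\lambda_4,\lambda_3\lambda_5,\lambda_4\lambda_5\}$ together exhaust the ten products $\lambda_i\lambda_j$ with $1\le i<j\le5$, each exactly once. Hence the left-hand side equals $\sum_{1\le i<j\le5}\lambda_i\lambda_j$ minus the nonnegative deficits $\lambda_1\lambda_5(1-a^2)$, $\lambda_1\lambda_4(1-b^2)$, $\lambda_2\lambda_3(1-c^2)$, $\lambda_2\lambda_5(1-d^2)$, $\lambda_3\lambda_4(1-e^2)$.

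Second, I would exploit the symmetry underlying Lemma \ref{omega}. The map $g(x,y)=(b,c)$ cyclically permutes $a,d,c,e,b$, and a short verification shows that if one simultaneously applies the $5$-cycle $\tau=(1\,5\,2\,3\,4)$ to the indices $\{1,\dots,5\}$ (leaving $6$, hence $\lambda_6=\max_i\lambda_i$, untouched), then $\tau$ carries the inside coefficients to themselves and the outside products to themselves in the same way $g$ moves the corresponding squares. Consequently the whole expression is invariant under the combined action of $g$ and $\tau$, so I may rotate the configuration, e.g.\ to put a chosen one of the five squares into the role of the largest. This cuts the case analysis down to a few representative patterns.

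Third comes the case analysis, organised by which of $a^2,\dots,e^2$ exceed $\frac{3}{5}$. On the region $\Omega$ of Lemma \ref{omega}, Lemma \ref{lemma7.2} gives that all five squares are at most $\frac{9}{16}<\frac{3}{5}$, so $f(x,y)\le\frac{3}{5}\bigl(\lambda_1\lambda_5+\lambda_1\lambda_4+\lambda_2\lambda_3+\lambda_2\lambda_5+\lambda_3\lambda_4\bigr)$ and Lemma \ref{lemma3.6} closes this case at once. For a point outside $\Omega$ the pentagon relation restricts the possible patterns of large squares: if $v_i>\sqrt{3/5}$ then $v_{i-1}v_{i+1}=1-v_i<1-\sqrt{3/5}$ is small, so no three cyclically consecutive squares can all be large. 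In each admissible pattern the squares that remain small correspond to a disjoint pair (or a triangle) of the products $\lambda_i\lambda_j$; bounding the large squares by $1$, retaining the genuine deficits $1-v^2$ coming from the relations, and then applying Lemma \ref{lemma-2} to a disjoint pair, or Lemma \ref{lemma3.4}/\ref{lemma3.5} to a triangle or a forced zero, yields the bound $2$.

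I expect the main obstacle to be the configurations in which two or three squares are simultaneously large, for instance $a^2,d^2,e^2$ near a corner such as $x\to1$, where $b^2,c^2$ (carried by the disjoint pair $\lambda_1\lambda_4,\lambda_2\lambda_3$) are small. There the crude replacement of a large square by $1$ is too lossy: one must keep the precise deficits dictated by the pentagon relations, for example $1-a^2=bd(2-bd)$, and balance them against the drop furnished by Lemma \ref{lemma-2} on the complementary products, all while respecting the constraint $\lambda_6=\max_i\lambda_i$. Making this balance quantitative in every rotated pattern is the delicate part of the argument.
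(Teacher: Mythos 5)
Your setup is sound and matches the structure the paper exploits: the five quantities $\frac{1-y}{1-xy},\ \frac{1-x}{1-xy},\ 1-xy,\ y,\ x$ do satisfy the pentagon relations you list, the inside and outside coefficients do partition the ten products $\lambda_i\lambda_j$, the cyclic symmetry via $\tau=(1\,5\,2\,3\,4)$ is real (the paper uses it implicitly through the substitution $c=\frac{1-x}{1-xy}$, $d=1-xy$), and your treatment of the region $\Omega$ (Lemma~\ref{lemma7.2} followed by Lemma~\ref{lemma3.6}) is exactly the paper's endgame. The gap is everything outside $\Omega$, which is where the bulk of the work lies, and you concede it yourself. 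Organising cases by which squares exceed $\frac{3}{5}$, bounding the large squares by $1$, and invoking Lemma~\ref{lemma-2} on a disjoint pair does not close the hard configurations: for instance, if only $c=1-xy$ drops below $\frac{1}{2}$, the deficit you can certify outright sits on $\lambda_{2}\lambda_{3}$ alone, which is not a disjoint pair in the sense of Lemma~\ref{lemma-2}; to apply that lemma you must also extract a quantitative deficit from a square whose coefficient is disjoint from $\{2,3\}$ (i.e. $\lambda_{1}\lambda_{4}$, $\lambda_{1}\lambda_{5}$ or $\lambda_{4}\lambda_{5}$), and your sketch supplies no mechanism for doing this uniformly. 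Saying that one must "keep the precise deficits and balance them" names the problem rather than solving it.

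The paper's mechanism for the complement of $\Omega$ is precisely the piece your proposal lacks: one computes $\frac{\partial^{2}}{\partial x^{2}}\left(\frac{1-x}{1-xy}\right)^{2}=\frac{2(1-y)((2x-3)y+1)}{(1-xy)^{4}}$, so $f(\cdot,y)$ is convex in $x$ wherever $(2x-3)y+1\geq 0$ and is therefore maximised on the curves $x=0$, $x=\frac{3-\frac{1}{y}}{2}$, $x=1$; on each of these curves two of the five squares collapse into a single $\max$, which lets one "ignore" a disjoint pair (or a triple) of $\lambda$-products and finish with Lemmas~\ref{lemma-2} and~\ref{lemma3.4}; the cyclic change of variables then transports this conclusion to the remaining pieces of the complement, leaving exactly $\Omega$. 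Without this convexity reduction, or a genuine quantitative substitute for it, your argument establishes the inequality only on $\Omega$.
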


    \begin{proof}
              Let us calculate the second derivative $\frac{\partial^{2}}{\partial x^2}\left(\frac{1-x}{1-xy}\right)^2$. 

        $$
            \frac{\partial^{2}}{\partial x^2}\left(\frac{1-x}{1-xy}\right)^2 = \frac{2(1-y)((2x-3)y + 1)}{(1-xy)^4}.
        $$
        Therefore if $(2x-3)y+1\geq 0$, then $\frac{\partial^{2}}{\partial x^2}f(x,y)\geq 0$. Thus, for $(x,y) \in [0;1)^2 \cap \{(2x-3)y + 1\geq 0\}$ we get the following estimate
        $$
            f(x,y)\leq \max \left(f\left(\max\left(0,\frac{3-\frac{1}{y}}{2}\right), y\right), f(1,y)\right).
        $$
        Our goal now is to find upper bounds for $f(0,y),f\left(\frac{3-\frac{1}{y}}{2},y\right),f(1,y).$ 

        $$
            f(0,y) = \lambda_{1}\lambda_{5} + \lambda_{1}\lambda_{4} (1-x)^2 + \lambda_{2}\lambda_{3} + \lambda_{2}\lambda_{5}\cdot 0 + \lambda_{3}\lambda_{4}x^{2} 
        $$

        $$
            \leq \lambda_{1}\lambda_{5}  + \lambda_{2}\lambda_{3} + \max(\lambda_{1}\lambda_{4},\lambda_{3}\lambda_{4}).
        $$
        Therefore we may ignore  either $\lambda_{1}\lambda_{4} + \lambda_{2}\lambda_{5}$ or $\lambda_{3}\lambda_{4} + \lambda_{2}\lambda_{5}$.

        $$
            f(1,y) = \lambda_{1}\lambda_{5} + \lambda_{1}\lambda_{4} \cdot 0 + \lambda_{2}\lambda_{3}(1-y)^2 + \lambda_{2}\lambda_{5}y^{2} + \lambda_{3}\lambda_{4}
        $$
        $$
            \leq \lambda_{1}\lambda_{5} + \lambda_{3}\lambda_{4} + \max (\lambda_{2}\lambda_{3},\lambda_{2}\lambda_{5}). 
        $$
        Therefore we may ignore  either $\lambda_{1}\lambda_{4} + \lambda_{2}\lambda_{3}$ or $\lambda_{1}\lambda_{4} + \lambda_{2}\lambda_{5}$.
        
        For $y\geq \frac{1}{3}$ we also need to estimate $f\left(\frac{3-\frac{1}{y}}{2}, y\right)$. 
        $$
         f\left(\frac{3-\frac{1}{y}}{2}, y\right) =  \frac{4}{9}\lambda_{1}\lambda_{5} + \frac{1}{9y^2}\lambda_{1}\lambda_{4}+ \lambda_{2}\lambda_{3}\left(\frac{3-3y}{2}\right)^2 + \lambda_{2}\lambda_{5}y^2 + \lambda_{3}\lambda_{4}\left(\frac{3-\frac{1}{y}}{2}\right)^2 .  
        $$
        Assume that $y\in \left[\frac{1}{3}; \frac{1}{2}\right]$. Then since
        $$
            \frac{\partial^{2}}{\partial y^2}\left(3-\frac{1}{y}\right)^2 = \frac{6 - 12y }{y^4}\geq 0,
        $$
        we have $\frac{\partial^{2}}{\partial y^2}f\left(\frac{3-\frac{1}{y}}{2}, y\right)\geq 0$ and $$
            f\left(\frac{3-\frac{1}{y}}{2}, y\right)\leq \max \left(f\left(0,\frac{1}{3}\right), f\left(\frac{1}{2}, \frac{1}{2}\right)\right).
        $$

        The estimation of $f(0,y)$ was already  done so we only need to estimate $f\left(\frac{1}{2},\frac{1}{2}\right).$

        $$
            f\left(\frac{1}{2},\frac{1}{2}\right) = \frac{4}{9}\lambda_{1}\lambda_{5} + \frac{4}{9}\lambda_{1}\lambda_{4}+ \frac{9}{16}\lambda_{2}\lambda_{3} + \frac{1}{4}\lambda_{2}\lambda_{5} + \frac{1}{4}\lambda_{3}\lambda_{4} . 
        $$
        Consider two cases: $\lambda_{1}\lambda_{5}\leq \lambda_{1}\lambda_{4}$ and $\lambda_{1}\lambda_{4}\leq \lambda_{1}\lambda_{5}$. In the first case 

        $$
             \frac{4}{9}\lambda_{1}\lambda_{5} + \frac{4}{9}\lambda_{1}\lambda_{4}\leq \lambda_{1}\lambda_{4};
        $$

        $$
            \frac{1}{4}\lambda_{3}\lambda_{4} + \frac{9}{16}\lambda_{2}\lambda_{3} \leq \max (\lambda_{3}\lambda_{4}, \lambda_{2}\lambda_{3} ),
        $$
        so we may ignore either $\lambda_{1}\lambda_{5} + \lambda_{3}\lambda_{4}$ or $\lambda_{1}\lambda_{5} + \lambda_{2}\lambda_{3}$. In the case where $\lambda_{1}\lambda_{4}\leq \lambda_{1}\lambda_{5}$ the following estimates hold.

        $$
             \frac{4}{9}\lambda_{1}\lambda_{5} + \frac{4}{9}\lambda_{1}\lambda_{4}\leq \lambda_{1}\lambda_{5};
        $$

        $$
            \frac{1}{4}\lambda_{2}\lambda_{5} + \frac{9}{16}\lambda_{2}\lambda_{3} \leq \max (\lambda_{2}\lambda_{5}, \lambda_{2}\lambda_{3} ).
        $$
        So we may ignore either $\lambda_{1}\lambda_{4} + \lambda_{2}\lambda_{5}$ or $\lambda_{1}\lambda_{4} + \lambda_{2}\lambda_{3}$. 

        As a result, we obtain that the desired estimate holds for all $$
            (x,y) \in [0;1]^{2}\cap \{ y\leq \frac{1}{2}, (2x-3)y+1\geq0\} 
        $$
        and similarly for all 
        $$
            (x,y) \in [0;1]^{2}\cap \left(\{ y\leq \frac{1}{2}, (2x-3)y+1\geq0\}\cup \{ x\leq \frac{1}{2}, (2y-3)x+1\geq0\}\right) $$
            
            $$= \left\{(a,b)\in[0;1]^{2} | a\leq \frac{1}{2} \text{ or } b\leq \frac{1}{2} \right\}.
        $$

        Let us make a change of variables:

        $$
            c  = \frac{1-x}{1-xy};
        $$

        $$
            d  = 1-xy.
        $$
        Then  
        $$
            y = \frac{1-d}{1-cd};
        $$
        $$
            \frac{1-y}{1-xy} = \frac{1-c}{1-cd};
        $$
        $$
            x = 1- cd.
        $$

        The expression in terms of new variables has the form

         $$
            \lambda_{1}\lambda_{5}\left(\frac{1-c}{1-cd}\right)^2 + \lambda_{1}\lambda_{4} c^2 + \lambda_{2}\lambda_{5}\left(\frac{1-d}{1-cd}\right)^2 + \lambda_{3}\lambda_{4}(1-cd)^{2} + \lambda_{2}\lambda_{3}d^2
        $$
        and similarly the desired inequality holds for $c\leq \frac{1}{2}$ or $d\leq \frac{1}{2}$. That is $2x - xy \geq 1$ or $xy\geq \frac{1}{2}$.  Then we also get the estimate,  if $2y-xy\geq 1$.  The remaining case is when $x\geq \frac{1}{2}, y\geq \frac{1}{2}, xy\leq \frac{1}{2}, 2x-xy\leq 1, 2y-xy\leq 1$. That is $(x,y)\in \Omega.$

        If $(x,y)\in\Omega$ then by using Lemma~\ref{lemma7.2} and Lemma~\ref{lemma3.6} we get the following chain of inequalities:

        $$
            \lambda_{1}\lambda_{2} + \lambda_{1}\lambda_{3} +  \lambda_{2}\lambda_{4} + \lambda_{3}\lambda_{5} + \lambda_{4}\lambda_{5} + f(x,y) 
        $$
        $$
            \leq   \lambda_{1}\lambda_{2} + \lambda_{1}\lambda_{3} + \lambda_{2}\lambda_{4} +\lambda_{3}\lambda_{5} + \lambda_{4}\lambda_{5} + \frac{3}{5}(\lambda_{1}\lambda_{5} + \lambda_{1}\lambda_{4} +\lambda_{2}\lambda_{3} + \lambda_{2}\lambda_{5} + \lambda_{3}\lambda_{4} ) \leq 2.
        $$  
        
    \end{proof}
    
    \begin{proposition}
    \label{bad-case}
        Let  $\{a_{ij}\}_{1\leq i<j\leq 5}$ is peculiar. Then we have
    
        $$
             \sum_{1\leq i<j\leq 5}\lambda_{i}\lambda_{j}a_{ij}^2 \leq 2.
        $$
    \end{proposition}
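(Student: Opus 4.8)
The plan is to reduce the claim directly to Lemma~\ref{lemma7.3} by writing out the target sum for a peculiar set and recognising its non-constant part as the function $f$ appearing there. First I would split the ten terms of $\sum_{1\le i<j\le 5}\lambda_i\lambda_j a_{ij}^2$ according to Definition~\ref{def-bad}. The five pairs $(1,2),(1,3),(2,4),(3,5),(4,5)$ carry $|a_{ij}|=1$, so they contribute exactly $\lambda_1\lambda_2+\lambda_1\lambda_3+\lambda_2\lambda_4+\lambda_3\lambda_5+\lambda_4\lambda_5$, which is precisely the constant block in Lemma~\ref{lemma7.3}. The remaining five pairs $(1,4),(1,5),(2,3),(2,5),(3,4)$ are governed by the two free parameters $|a_{14}|$ and $|a_{15}|$ through the formulas of Definition~\ref{def-bad}, and these are the terms I want to identify with $f$.

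The key step is the change of variables
$$ x:=\frac{1-|a_{14}|}{|a_{15}|}=|a_{34}|, \qquad y:=\frac{1-|a_{15}|}{|a_{14}|}=|a_{25}|. $$
I would then verify the five algebraic identities
$$ \left(\frac{x-1}{xy-1}\right)^2=a_{14}^2,\quad \left(\frac{y-1}{xy-1}\right)^2=a_{15}^2,\quad (1-xy)^2=a_{23}^2,\quad y^2=a_{25}^2,\quad x^2=a_{34}^2. $$
The decisive one is $1-xy=\frac{|a_{14}|+|a_{15}|-1}{|a_{14}||a_{15}|}=|a_{23}|$, obtained by clearing denominators; once this is in hand the two fractional identities for $a_{14}^2,a_{15}^2$ collapse because their numerators share the common factor $|a_{14}|+|a_{15}|-1$. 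These identities show that the non-constant block equals $f(x,y)$, so that
$$ \sum_{1\le i<j\le 5}\lambda_i\lambda_j a_{ij}^2=\lambda_1\lambda_2+\lambda_1\lambda_3+\lambda_2\lambda_4+\lambda_3\lambda_5+\lambda_4\lambda_5+f(x,y). $$

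Next I would dispose of the degenerate configurations. If the peculiar set contains a zero entry, it falls under the zero case (Proposition~\ref{zero-case}) after relabelling, so I may assume every $a_{ij}\neq 0$. Then $a_{14},a_{15}\neq 0$ forces $|a_{14}|,|a_{15}|>0$; $a_{34},a_{25}\neq 0$ forces $|a_{14}|,|a_{15}|<1$; and $a_{23}\neq 0$ forces $|a_{14}|+|a_{15}|>1$. Consequently $x,y\in(0,1)$ — in particular $(x,y)\in[0,1]^2\setminus\{(1,1)\}$, the domain required by Lemma~\ref{lemma7.3}. Applying that lemma under the standing hypothesis $\lambda_6=\max_i\lambda_i$ gives
$$ \lambda_1\lambda_2+\lambda_1\lambda_3+\lambda_2\lambda_4+\lambda_3\lambda_5+\lambda_4\lambda_5+f(x,y)\le 2, $$
which is exactly the assertion.

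I expect the only genuine obstacle to be spotting the correct substitution: the natural free parameters of a peculiar set are $|a_{14}|,|a_{15}|$, yet $f$ is written in variables that turn out to be $|a_{34}|,|a_{25}|$, so one must realise that reparametrising by the latter pair is what converts the messy fractional expressions into the clean $f$. Once the substitution is guessed, the five identities are routine algebra and all the analytic work has already been carried out in Lemmas~\ref{omega}, \ref{lemma7.2} and \ref{lemma7.3}; what remains is merely the bookkeeping that rules out zeros (and hence the excluded point $(x,y)=(1,1)$) by referring them to the zero case.
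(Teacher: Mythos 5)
Your proposal is correct and follows essentially the same route as the paper: split off the five unit entries, substitute $x=|a_{34}|=\frac{1-|a_{14}|}{|a_{15}|}$, $y=|a_{25}|=\frac{1-|a_{15}|}{|a_{14}|}$ to turn the remaining block into $f(x,y)$, and invoke Lemma~\ref{lemma7.3}. The only divergence is the boundary case $|a_{14}|+|a_{15}|=1$ (i.e.\ $a_{23}=0$), which you defer to Proposition~\ref{zero-case} by relabelling — legitimate and non-circular — whereas the paper disposes of it on the spot with the convexity bound $\lambda_1\lambda_5 a_{15}^2+\lambda_1\lambda_4(1-|a_{15}|)^2\le\max(\lambda_1\lambda_5,\lambda_1\lambda_4)$ and Lemma~\ref{lemma-2}.
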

    
    \begin{proof}
        Since $|a_{12}|=|a_{13}|=|a_{24}|=|a_{35}|=|a_{45}|=1$ we need to estimate the remaining part, which is 

         $$
            \lambda_{1}\lambda_{5}a_{15}^2 + \lambda_{1}\lambda_{4} a_{14}^2 + \lambda_{2}\lambda_{5}\left(\frac{1-|a_{15}|}{|a_{14}|}\right)^2 + \lambda_{3}\lambda_{4}\left(\frac{1-|a_{14}|}{|a_{15}|}\right)^2 + \lambda_{2}\lambda_{3}\left(\frac{|a_{14}|+|a_{15}| - 1}{|a_{14}a_{15}|}\right)^2 .
        $$
        
    If $|a_{14}| + |a_{15}| = 1$, then we instantly lose $\lambda_{2}\lambda_{3}$ and 
       $$
            \lambda_{1}\lambda_{5}a_{15}^2 + \lambda_{1}\lambda_{4}(1-|a_{15}|)^2 \leq \max (\lambda_{1}\lambda_{5},\lambda_{1}\lambda_{4}).
       $$
    Therefore we may ignore  either $\lambda_{1}\lambda_{5} + \lambda_{2}\lambda_{3}$ or $\lambda_{1}\lambda_{4} + \lambda_{2}\lambda_{3}$. Then the estimate follows from Lemma~\ref{lemma-2}.

        If $|a_{14}| + |a_{15}| > 1$, then let us define 
        $$
            x:=\frac{1-|a_{14}|}{|a_{15}|};
        $$
        $$         y:=\frac{1-|a_{15}|}{|a_{14}|}.
        $$
        Then $x,y\in [0;1)$ and 
        $$
            |a_{15}| = \frac{1-y}{1-xy};
        $$
         $$
            |a_{14}| = \frac{1-x}{1-xy};
        $$
        $$
            \frac{|a_{14}|+|a_{15}| - 1}{|a_{14}a_{15}|} = 1-xy.
        $$

        After changing the variables, our expression becomes the following

        $$      \lambda_{1}\lambda_{5}\left(\frac{y-1}{xy-1}\right)^2 + \lambda_{1}\lambda_{4} \left(\frac{x-1}{xy-1}\right)^2 + \lambda_{2}\lambda_{3}(1-xy)^2 + \lambda_{2}\lambda_{5}y^{2} + \lambda_{3}\lambda_{4}x^{2} .
        $$ 

        Applying Lemma~\ref{lemma7.3} completes the proof.

    \end{proof}

\end{section}

\begin{section}{The case \texorpdfstring{$a_{12} = 0$}{Lg}}
\label{s8}
    \begin{lemma}
    \label{sss}
        Let $x+y+z=0$ and $(x,y,z)\in[-1;1]^3$. Then for any $a,b,c \geq 0$ it holds that 
        $$
            ax^2+by^2+cz^2 \leq a+b+c-\min(a,b,c).
        $$
        Therefore we may ignore  either $a$ or $b$ or $c$.
    \end{lemma}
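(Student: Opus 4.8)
The plan is to exploit convexity. The function $f(x,y,z) = ax^2 + by^2 + cz^2$ is convex on $\mathbb{R}^3$ because $a,b,c \geq 0$, and here its domain is the polytope $P := [-1;1]^3 \cap \{x+y+z=0\}$, the intersection of the cube with a plane through the origin. A convex function attains its maximum over a compact convex polytope at one of the vertices, so it suffices to evaluate $f$ at the vertices of $P$ and pick the largest value.

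The key computational step is to enumerate the vertices of $P$. First I would observe that no vertex of the cube lies on the plane: the coordinates of a cube vertex are each $\pm 1$, so their sum is odd and cannot equal $0$. Hence every vertex of $P$ arises as the transversal intersection of the plane with an edge of the cube. An edge fixes two coordinates at $\pm 1$ and lets the third range over $[-1;1]$; imposing $x+y+z=0$ forces the two fixed coordinates to be $+1$ and $-1$ (so that their sum is $0$) and the free coordinate to equal $0$. This yields exactly the six vertices
$$
(1,-1,0),\ (-1,1,0),\ (1,0,-1),\ (-1,0,1),\ (0,1,-1),\ (0,-1,1),
$$
which are precisely the points already used in the ``we may ignore'' discussion.

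Finally I would substitute. At each vertex exactly one coordinate vanishes and the other two are $\pm 1$, so $f$ takes the value $a+b$, $a+c$, or $b+c$. Therefore
$$
\max_{P} f = \max(a+b,\, a+c,\, b+c) = a+b+c-\min(a,b,c),
$$
which is the claimed bound, and the concluding remark that we may ignore whichever of $a,b,c$ is smallest is immediate. There is no serious obstacle here: the only point requiring care is the vertex enumeration, and in particular the parity argument showing that no cube vertex lies on the cutting plane, which guarantees that the six listed points exhaust the vertices of $P$.
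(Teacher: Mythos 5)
Your proposal is correct and follows essentially the same route as the paper: the paper's proof likewise observes that the function is convex and that the domain is the convex hull of the six points $(1,-1,0),(-1,1,0),(1,0,-1),(-1,0,1),(0,1,-1),(0,-1,1)$, so the maximum is attained at one of them. Your version merely spells out the vertex enumeration (via the parity argument) and the final evaluation, which the paper leaves implicit.
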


    \begin{proof}
        The function $(x,y,z)\mapsto ax^2 + by^2 + cz^2$ is convex and its domain is just $$conv(\{(1,-1,0), (-1,1,0),(1,0,-1), (-1,0,1),(0,1,-1),(0,-1,1)\}).$$ 
    \end{proof}

    


        


    \begin{notation}
        We write $\pm a \pm b$ if there is a fixed choice of signs, but we do not know this choice. For example, assume that $|x|=|y|=1$ and  we have the expression $xa+yb$. Then we can rewrite it as $\pm a \pm b$.
    \end{notation}

    \begin{proposition}
        \label{zero-case}
        Let  $\{a_{ij}\}_{1\leq i<j\leq 5}$ is admissible and $a_{12}=0$. Then we have
    
        $$
             \sum_{1\leq i<j\leq 5}\lambda_{i}\lambda_{j}a_{ij}^2 \leq 2.
        $$
    \end{proposition}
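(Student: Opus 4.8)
The plan is to first convert the hypothesis $a_{12}=0$ into rigid structural information via Lemma~\ref{det-like}. With $a_{12}=0$ the first three identities become $a_{13}a_{24}=a_{14}a_{23}$, $a_{13}a_{25}=a_{15}a_{23}$, $a_{14}a_{25}=a_{15}a_{24}$, which are exactly the vanishing of all three $2\times2$ minors of the matrix whose rows are $(a_{13},a_{14},a_{15})$ and $(a_{23},a_{24},a_{25})$; hence this matrix has rank at most one. I would first dispose of the subcases in which one of the two triples vanishes: then the three corresponding terms drop from the objective and the remaining sum over the indices $\{2,3,4,5\}$ (paired through the fifth relation $a_{24}a_{35}-a_{25}a_{34}=a_{23}a_{45}$) is handled exactly as in Proposition~\ref{lambda-zero-case}, closing with Lemma~\ref{lemma3.4} since here $\lambda_1$ need not vanish. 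Otherwise I write $a_{2j}=s\,a_{1j}$ for $j\in\{3,4,5\}$ with a fixed scalar $s$; the fifth identity is then $s$ times the fourth, so the only independent surviving relation is $a_{14}a_{35}-a_{15}a_{34}=a_{13}a_{45}$, and I record the entrywise constraint $|s\,a_{1j}|=|a_{2j}|\le1$.

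The core estimate would mimic the pairing argument of Proposition~\ref{lambda-zero-case}: for two entries with $|p|,|q|\le1$ and weights $\mu\le\nu$ one has $\mu p^2+\nu q^2\le\mu(pq)^2+\nu$. Applying this to the three pairs $(a_{14},a_{35})$, $(a_{15},a_{34})$, $(a_{13},a_{45})$ replaces their squares by the squared products $X=a_{14}a_{35}$, $Y=a_{15}a_{34}$, $Z=a_{13}a_{45}$, which by the surviving relation satisfy the \emph{linear} constraint $X-Y-Z=0$ with $|X|,|Y|,|Z|\le1$. Lemma~\ref{sss} then lets me ignore one of $X^2,Y^2,Z^2$. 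Using $a_{2j}=s\,a_{1j}$ I first merge the row-$1$ and row-$2$ coefficients into $(\lambda_1+s^2\lambda_2)\lambda_j$, so that the pairing carries honest weights; collecting the surviving larger weights together with the leftover row-$2$ products, the total collapses to $\sum_{1\le i<j\le5}\lambda_i\lambda_j-\lambda_1\lambda_2-m$, where $m$ is the weight discarded by Lemma~\ref{sss}.

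It then remains to prove this is at most $2$. When $m=\lambda_p\lambda_q$ with $\{p,q\}\cap\{1,2\}=\varnothing$, the quantity $\lambda_1\lambda_2+m$ is a sum of two products on four distinct indices and Lemma~\ref{lemma-2} finishes at once. The main obstacle is precisely that for the generic ordering $\lambda_1\le\cdots\le\lambda_5$ the discarded weight $m$ is forced to share the index $1$ (or $2$) with $\lambda_1\lambda_2$, and one can check that there the crude quantity $\sum\lambda_i\lambda_j-\lambda_1\lambda_2-m$ genuinely exceeds $2$ for some admissible weights; so Lemma~\ref{lemma-2} alone is insufficient. In that regime the argument must exploit the entrywise bound $|s\,a_{1j}|\le1$ against the merged coefficient $\lambda_1+s^2\lambda_2$ (feeding the extra room back into the pairing, or pairing the block against the fifth relation instead of the fourth) and close with the sharper weight inequalities of Lemmas~\ref{lemma3.4} and~\ref{lemma3.6}. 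Throughout, the sign ambiguities created by passing to absolute values are harmless and are tracked with the $\pm$ bookkeeping, since only the squares $a_{ij}^2$ enter the objective.
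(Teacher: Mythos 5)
Your opening moves are sound and parallel the paper's: converting $a_{12}=0$ into the rank-one structure $a_{2j}=s\,a_{1j}$, and disposing of the subcases where a whole row of entries vanishes via Proposition~\ref{lambda-zero-case} and Lemmas~\ref{lemma-2}, \ref{lemma3.4}. The problem is that your core estimate does not close, and you concede this yourself. After the pairing and Lemma~\ref{sss} you arrive at a bound of the shape $\sum_{1\le i<j\le 5}\lambda_i\lambda_j-\lambda_1\lambda_2-m$, and you correctly observe that when the discarded weight $m$ shares an index with $\{1,2\}$ --- or, worse, when $m$ is the merged weight $(\lambda_1+s^2\lambda_2)\lambda_j$ with $\lambda_1,\lambda_2$ and $s$ all small, so that $m$ is negligible and the bound degenerates to essentially $\sum\lambda_i\lambda_j-\lambda_1\lambda_2$, which can be close to $2.16$ already for $\lambda_1\approx 0$ and the rest equal --- the estimate exceeds $2$. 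At that point you write that the argument \emph{must} exploit the entrywise bound and close with Lemmas~\ref{lemma3.4} and~\ref{lemma3.6}, but you do not carry this out; that unfinished regime is exactly where the substance of the proposition lies. The paper handles it by a mechanism your sketch never recovers: a maximality argument (perturbing $v$ along the fibre of the cross product, as in Proposition~\ref{max-form}) showing that in a maximizing configuration each of six specified triples of entries meets $\partial[-1;1]^3$, which forces many $|a_{ij}|$ to equal $1$ exactly; only with that rigid boundary structure does the subsequent case analysis reduce everything to Lemmas~\ref{lemma-2}, \ref{lemma3.4} and \ref{sss}.

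A secondary technical issue: the constraint $|s\,a_{1j}|\le 1$ does not give $|s|\le 1$, so the merged coefficient $(\lambda_1+s^2\lambda_2)\lambda_j$ can exceed $\lambda_1\lambda_j+\lambda_2\lambda_j$. When that merged weight is the larger element of a pair, the pairing inequality returns $\max(\mu,\nu)=(\lambda_1+s^2\lambda_2)\lambda_j$ as a full contribution, which overshoots the budget $\lambda_1\lambda_j+\lambda_2\lambda_j$ that your claimed collapse to $\sum\lambda_i\lambda_j-\lambda_1\lambda_2-m$ implicitly charges. This is repairable by taking whichever of the two proportional rows has the larger entries as the base of the factorization, but as written the collapse is not justified. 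In sum, the proposal is a plausible plan with a self-acknowledged hole in its main case, not a proof.
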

    
    \begin{proof}
        Assume that the set $\{a_{ij}\}$ maximizes the value of $\sum_{1\leq i<j\leq 5}\lambda_{i}\lambda_{j}a_{ij}^2$ among all admissible sets with $a_{12}=0.$ 
    
        Let $a_{13}=0$. Then either $a_{14}=0$ or $a_{23}=0$. If $a_{14}=0$ then either $a_{15}=0$ or $a_{24}=0$. 

        The case $a_{12}=a_{13}=a_{14}=a_{15}=0$ is similar to the case $\lambda_{1} =0$, which we did in Proposition~\ref{lambda-zero-case}. 
        
        The case where $a_{12}=a_{13}=a_{14}=a_{24}=0$ is the case with missing $\lambda_{1}\lambda_{3} + \lambda_{2}\lambda_{4}$. Lemma~\ref{lemma-2} completes the proof in this case. 

        The case where $a_{12}=a_{13}=a_{23}=0$ is the case with missing $\lambda_{1}\lambda_{2} + \lambda_{2}\lambda_{3} + \lambda_{1}\lambda_{3}$. Lemma~\ref{lemma3.4} completes the proof in this case.

        Therefore we can assume that $a_{13},a_{14},a_{15},a_{23},a_{24},a_{25}$ are not zeros. Then the relations become the following.

        $$
            a_{24} = \frac{1}{a_{13}}a_{14}a_{23}; 
        $$
        $$
            a_{25} = \frac{1}{a_{13}}a_{15}a_{23};
        $$

        $$
            a_{45} = \frac{1}{a_{13}}(a_{14}a_{35} - a_{15}a_{34}).
        $$

        Consider $u=(0, a_{14},a_{15}), v=(-a_{23}, a_{34},a_{35})$.  Again, we can move $v$ inside $[-1;1]^3$ with $[u;v_{t}] \equiv [u;v]$ as in Proposition~\ref{max-form}, and hence,  conclude that $v$ must lie on the boundary $\partial [-1;1]^3.$ Similarly $(a_{42},a_{43},a_{45}), (a_{53},a_{52},a_{54}),(a_{13}, a_{34},a_{35}),(a_{41},a_{43},a_{45}),(a_{53},a_{51},a_{54}) \in \partial [-1;1]^{3}$. Thus, in each of the following six lines, at least one element is equal to 1 or -1.

        $$
            a_{32},a_{34},a_{35};
        $$
        $$
            a_{42},a_{43},a_{45};
        $$
        $$
            a_{52},a_{53},a_{54};
        $$

         $$
            a_{31},a_{34},a_{35};
        $$
        $$
            a_{41},a_{43},a_{45};
        $$
        $$
            a_{51},a_{53},a_{54}.
        $$
    Thus there are two possible  cases: $|a_{ij}|\neq 1$ for $i,j\in \{3,4,5\}$ and $|a_{34}|=1$.     

    \textbf{Case 1 ($|a_{ij}|\neq 1$ for $i,j\in \{3,4,5\}$ ).} In this case 
    $$
        |a_{13}|=|a_{14}|=|a_{15}|=|a_{23}|=|a_{24}|=|a_{25}|=1
    $$
    and $|a_{45}| = |\pm a_{35} \pm a_{34}|.$ Thus for some choice of signs, the following is true $$\pm a_{45} \pm a_{35} \pm a_{34} = 0.$$ 
    Therefore by Lemma~\ref{sss} we may ignore either $\lambda_{1}\lambda_{2} + \lambda_{3}\lambda_{4}$ or  $\lambda_{1}\lambda_{2} + \lambda_{3}\lambda_{5}$ or $\lambda_{1}\lambda_{2} + \lambda_{4}\lambda_{5}$.

    \textbf{Case 2. ($|a_{34}|=1$).} In this case there is at least one $1$ among $|a_{52}|,|a_{53}|,|a_{54}|$ and among $|a_{51}|,|a_{53}|,|a_{54}|.$ And there are only two cases: $|a_{53}|=1$ and $|a_{52}|=|a_{51}|=1.$

    \textbf{Case 2.1. ($|a_{34}|=1$, $|a_{53}|=1$).} In this case we have the following restrictions:

    $$
            |a_{24}| = \frac{1}{|a_{13}|}|a_{14}a_{23}|\leq 1; 
        $$
        $$
            |a_{25}| = \frac{1}{|a_{13}|}|a_{15}a_{23}|\leq 1;
        $$

        $$
            |a_{45}| = \frac{1}{|a_{13}|}|\pm a_{14} \pm  a_{15}|\leq 1.
        $$

    Then 
    $$
        \sum_{1\leq i < j \leq 5} \lambda_{i}\lambda_{j}a_{ij}^2 = (\lambda_{3}\lambda_{4} + \lambda_{3}\lambda_{5}) + \lambda_{1}\lambda_{5}a_{15}^2  $$
        $$     
 + \lambda_{2}\lambda_{5}\left(\frac{a_{15}a_{23}}{a_{13}}\right)^2 + \lambda_{1}\lambda_{4}a_{14}^2 + \lambda_{2}\lambda_{4} \left(\frac{a_{14}a_{23}}{a_{13}}\right)^2 + \lambda_{1}\lambda_{3}a_{13}^2 + \lambda_{2}\lambda_{3}a_{23}^2 +\lambda_{4}\lambda_{5}\left(\frac{\pm a_{14} \pm a_{15}}{a_{13}}\right)^2.   $$

    If $|a_{24}|,|a_{25}|,|a_{23}|<1 $, then we can increase $|a_{23}|.$ Therefore there are two cases: $|a_{24}|=1$ and $|a_{23}|=1$.

     \textbf{Case 2.1.1. ($|a_{34}|=1$, $|a_{53}|=1$, $|a_{24}|=1$).} The non-trivial part of expression is 

            $$     
    \lambda_{1}\lambda_{5}a_{15}^2 + \lambda_{2}\lambda_{5}\left(\frac{a_{15}}{a_{14}}\right)^2 + \lambda_{1}\lambda_{4}a_{14}^2 + \lambda_{1}\lambda_{3}a_{13}^2 + \lambda_{2}\lambda_{3}\left(\frac{a_{13}}{a_{14}}\right)^2 +\lambda_{4}\lambda_{5}\left(\frac{\pm a_{14} \pm a_{15}}{a_{13}}\right)^2   $$
    and the restrictions are
    
    $$
        |a_{15}|\leq |a_{14}|;
    $$
    $$
        |a_{13}|\leq |a_{14}|;
    $$
    $$
        |\pm a_{14} \pm a_{15}| \leq |a_{13}|.
    $$
    If $|a_{14}|<1$ and $|\pm a_{14} \pm a_{15}| < |a_{13}|$, then it is possible to increase both $|a_{14}|$ and $|a_{15}|$. Therefore there are two cases: $|a_{14}|=1$ and $|\pm a_{14} \pm a_{15}| = |a_{13}|$.

    \textbf{Case 2.1.1.1. ($|a_{34}|=1$, $|a_{53}|=1$, $|a_{24}|=1$, $|a_{14}|=1$).} The non-trivial part of expression is 

                $$     
        (\lambda_{1}\lambda_{5} + \lambda_{2}\lambda_{5})a_{15}^2 + (\lambda_{1}\lambda_{3} + \lambda_{2}\lambda_{3})a_{13}^2+\lambda_{4}\lambda_{5}\left(\frac{\pm 1 \pm a_{15}}{a_{13}}\right)^2   $$
        and the restriction is
        $$
            |\pm 1 \pm a_{15}| \leq |a_{13}|.
        $$
        Then $|a_{13}|\in \left[|\pm 1 \pm a_{15}|; 1\right]$. So it suffices to substitute only $|a_{13}|=|\pm 1 \pm a_{15}|$ and $|a_{13}|=1.$

        \textbf{Case 2.1.1.1.1. ($|a_{34}|=1$, $|a_{53}|=1$, $|a_{24}|=1$, $|a_{14}|=1$, $|a_{13}|=|\pm 1 \pm a_{15}|$).} The non-trivial part of expression is 

                $$     
        (\lambda_{1}\lambda_{5} + \lambda_{2}\lambda_{5})a_{15}^2 + (\lambda_{1}\lambda_{3} + \lambda_{2}\lambda_{3})(1-|a_{15}|)^2 \leq \max (\lambda_{1}\lambda_{5} + \lambda_{2}\lambda_{5}, \lambda_{1}\lambda_{3} + \lambda_{2}\lambda_{3}).  $$
        Therefore we may ignore  either $\lambda_{1}\lambda_{2} +\lambda_{1}\lambda_{5} + \lambda_{2}\lambda_{5} $ or $\lambda_{1}\lambda_{2} + \lambda_{1}\lambda_{3} + \lambda_{2}\lambda_{3}$.
        
        \textbf{Case 2.1.1.1.2. ($|a_{34}|=1$, $|a_{53}|=1$, $|a_{24}|=1$, $|a_{14}|=1$, $|a_{13}|=1$).} The non-trivial part of expression is 

                $$     
        (\lambda_{1}\lambda_{5} + \lambda_{2}\lambda_{5})a_{15}^2 + \lambda_{4}\lambda_{5}(1-|a_{15}|)^2 \leq \max (\lambda_{1}\lambda_{5} + \lambda_{2}\lambda_{5}, \lambda_{4}\lambda_{5}).
        $$
    Therefore we may ignore either $\lambda_{1}\lambda_{2} +\lambda_{1}\lambda_{5} + \lambda_{2}\lambda_{5} $ or $\lambda_{1}\lambda_{2} + \lambda_{4}\lambda_{5}$.

       \textbf{Case 2.1.1.2. ($|a_{34}|=1$, $|a_{53}|=1$, $|a_{24}|=1$, $|a_{13}|=|\pm a_{14} \pm a_{15}|$).} The non-trivial part of expression is 

            $$     
    \lambda_{1}\lambda_{5}a_{15}^2 + \lambda_{2}\lambda_{5}\left(\frac{a_{15}}{a_{14}}\right)^2 + \lambda_{1}\lambda_{4}a_{14}^2 + \lambda_{1}\lambda_{3}(\pm a_{14} \pm a_{15})^2 + \lambda_{2}\lambda_{3}\left(\frac{\pm a_{14}\pm a_{15}}{a_{14}}\right)^2 $$

    $$     
    = (\lambda_{1}\lambda_{5}a_{14}^2 + \lambda_{2}\lambda_{5})\left(\frac{a_{15}}{a_{14}}\right)^2 + \lambda_{1}\lambda_{4}a_{14}^2 + (\lambda_{1}\lambda_{3}a_{14}^2 + \lambda_{2}\lambda_{3})\left(1 - \left |\frac{ a_{15}}{a_{14}}\right|\right)^2 $$
    $$
        \leq \lambda_{1}\lambda_{4} + \max (\lambda_{1}\lambda_{5} + \lambda_{2}\lambda_{5}, \lambda_{1}\lambda_{3}+
        \lambda_{2}\lambda_{3}).
    $$
    Therefore we may ignore either $\lambda_{1}\lambda_{2} +\lambda_{1}\lambda_{5} + \lambda_{2}\lambda_{5} $ or $\lambda_{1}\lambda_{2} + \lambda_{1}\lambda_{3} + \lambda_{2}\lambda_{3}$.

     \textbf{Case 2.1.2. ($|a_{34}|=1$, $|a_{53}|=1$, $|a_{23}|=1$).} The non-trivial part of the expression is 

        $$     
\lambda_{1}\lambda_{5}a_{15}^2  + \lambda_{2}\lambda_{5}\left(\frac{a_{15}}{a_{13}}\right)^2 + \lambda_{1}\lambda_{4}a_{14}^2 + \lambda_{2}\lambda_{4} \left(\frac{a_{14}}{a_{13}}\right)^2 + \lambda_{1}\lambda_{3}a_{13}^2  +\lambda_{4}\lambda_{5}\left(\frac{\pm a_{14} \pm a_{15}}{a_{13}}\right)^2   $$

$$
    = (\lambda_{1}\lambda_{5}a_{13}^2  + \lambda_{2}\lambda_{5})\left(\frac{a_{15}}{a_{13}}\right)^2 + (\lambda_{1}\lambda_{4}a_{13}^2 + \lambda_{2}\lambda_{4}) \left(\frac{a_{14}}{a_{13}}\right)^2  +\lambda_{4}\lambda_{5}\left(\frac{\pm a_{14} \pm a_{15}}{a_{13}}\right)^2 + \lambda_{1}\lambda_{3}a_{13}^2. 
$$
Then by Lemma~\ref{sss} we may ignore  either $\lambda_{1}\lambda_{5} + \lambda_{2}\lambda_{5} + \lambda_{1}\lambda_{2}$ or $\lambda_{1}\lambda_{4} + \lambda_{2}\lambda_{4} + \lambda_{1}\lambda_{2}$ or  $\lambda_{1}\lambda_{2} + \lambda_{4}\lambda_{5}$.

    \textbf{Case 2.2. ($|a_{34}|=1$, $|a_{51}|=|a_{52}|=1$).} In this case we have the following restrictions:

    $$
            |a_{14}|=|a_{24}|; 
        $$
        $$
            |a_{13}|=|a_{23}|;
        $$

        $$
            |a_{45}| = \frac{1}{|a_{13}|}(1 - |a_{14}a_{35}|).
        $$
    Then 
    $$
        \sum_{1\leq i < j \leq 5} \lambda_{i}\lambda_{j}a_{ij}^2 = (\lambda_{3}\lambda_{4} + \lambda_{1}\lambda_{5} + \lambda_{2}\lambda_{5})  $$
        $$     
 + (\lambda_{1}\lambda_{4} + \lambda_{2}\lambda_{4})a_{14}^2 + (\lambda_{1}\lambda_{3} + \lambda_{2}\lambda_{3})a_{13}^2 + \lambda_{3}\lambda_{5}a_{35}^2 +\lambda_{4}\lambda_{5}\frac{1}{a_{13}^2}(1-|a_{14}a_{35}|)^2.   $$

    We have $|a_{45}|\leq 1$, thus 

    $$
        \frac{1-|a_{13}|}{|a_{14}|}\leq |a_{35} | \leq 1
    $$
    and since this function is convex with respect to $|a_{35}|$ it suffices to substitute $|a_{35}| = \frac{1-|a_{13}|}{|a_{14}|}$ and $|a_{35}|=1$.

    \textbf{Case 2.2.1. ($|a_{34}|=1$, $|a_{51}|=|a_{52}|=1$, $|a_{35}| = \frac{1-|a_{13}|}{|a_{14}|}$).} The non-trivial part of the expression is 
     $$      (\lambda_{1}\lambda_{4} + \lambda_{2}\lambda_{4})a_{14}^2 + (\lambda_{1}\lambda_{3} + \lambda_{2}\lambda_{3})a_{13}^2 + \lambda_{3}\lambda_{5}\left(\frac{1-|a_{13}|}{|a_{14}|}\right)^2   $$
     and the remaining restrictions are $|a_{13}|\leq 1, |a_{14}|\leq 1, |a_{13}| + |a_{14}|\geq 1.$ Therefore $$ 1-|a_{13}|\leq |a_{14}| \leq 1$$
     and since the function is convex with respect to $|a_{14}|$, it suffices to substitute $|a_{14}| = 1 - |a_{13}|$ and $|a_{14}|=1$. 

     \textbf{Case 2.2.1.1. ($|a_{34}|=1$, $|a_{51}|=|a_{52}|=1$, $|a_{35}| = \frac{1-|a_{13}|}{|a_{14}|}$, $|a_{14}| = 1 - |a_{13}|$).} The non-trivial part of the expression is 
     $$      (\lambda_{1}\lambda_{4} + \lambda_{2}\lambda_{4})(1-|a_{13}|)^2 + (\lambda_{1}\lambda_{3} + \lambda_{2}\lambda_{3})a_{13}^2 \leq \max (\lambda_{1}\lambda_{4} + \lambda_{2}\lambda_{4}, \lambda_{1}\lambda_{3} + \lambda_{2}\lambda_{3}).   $$ 
     Therefore we may ignore  either $\lambda_{1}\lambda_{2} + \lambda_{1}\lambda_{3} + \lambda_{2}\lambda_{3}$ or $\lambda_{1}\lambda_{2} + \lambda_{1}\lambda_{4} + \lambda_{2}\lambda_{4}$ and Lemma~\ref{lemma3.4} completes the proof of this case. 

     \textbf{Case 2.2.1.2. ($|a_{34}|=1$, $|a_{51}|=|a_{52}|=1$, $|a_{35}| = \frac{1-|a_{13}|}{|a_{14}|}$, $|a_{14}| = 1$).} The non-trivial part of the expression is 
     $$       (\lambda_{1}\lambda_{3} + \lambda_{2}\lambda_{3})a_{13}^2 + \lambda_{3}\lambda_{5}(1-|a_{13}|)^2 \leq \max (\lambda_{1}\lambda_{3} + \lambda_{2}\lambda_{3}, \lambda_{3}\lambda_{5}).   $$
    Therefore we may ignore  either $\lambda_{1}\lambda_{2} + \lambda_{1}\lambda_{3} + \lambda_{2}\lambda_{3} $ or $\lambda_{1}\lambda_{2} + \lambda_{3}\lambda_{5}$. Lemmas ~\ref{lemma-2} and \ref{lemma3.4} complete the proof of this case.
    
    \textbf{Case 2.2.2. ($|a_{34}|=1$, $|a_{51}|=|a_{52}|=1$, $|a_{35}| = 1$).} The non-trivial part of the expression is 

    $$     
 (\lambda_{1}\lambda_{4} + \lambda_{2}\lambda_{4})a_{14}^2 + (\lambda_{1}\lambda_{3} + \lambda_{2}\lambda_{3})a_{13}^2  +\lambda_{4}\lambda_{5}\frac{1}{a_{13}^2}(1-|a_{14}|)^2.   $$

 This case is similar to the Case $2.2.1$ and we may ignore either $\lambda_{1}\lambda_{2} + \lambda_{1}\lambda_{3} + \lambda_{2}\lambda_{3}$ or  $\lambda_{1}\lambda_{2} + \lambda_{1}\lambda_{4} + \lambda_{2}\lambda_{4}$ or $\lambda_{1}\lambda_{2} + \lambda_{4}\lambda_{5}.$
    \end{proof}
    
\end{section}

\section*{Acknowledgement}
\noindent We would like to thank Nikita Kalinin for the help with the preparation of this article.

\textit{E-mail address:} \href{mailto:arkadiy.aliev@gmail.com}{arkadiy.aliev@gmail.com} 

\end{document}